\newcommand{\excise}[1]{}
\newtheorem{theorem}{Theorem}[section]
\newtheorem{lemma}[theorem]{Lemma}
\newtheorem{proposition}[theorem]{Proposition}
\theoremstyle{definition}
\newtheorem{example}[theorem]{Example}
\newtheorem{remark}[theorem]{Remark}
\newtheorem{definition}[theorem]{Definition}
\noindent\makebox[0mm][r]{\arabic{enumi}.}}
\noindent\makebox[0mm][r]{(\roman{enumi})}}
\newcommand{\baseRing}[1]{\ensuremath{\mathbb{#1}}}
\newcommand{\Z}{\baseRing{Z}}
\newcommand{\C}{\baseRing{C}}
\newcommand{\N}{\baseRing{N}}
\newcommand{\R}{\baseRing{R}}
\newcommand{\Q}{\baseRing{Q}}
\def\<{\langle}
\def\>{\rangle}
\def\0{\mathbf{0}}
\def\CC{{\mathbb C}}
\def\HH{{\mathcal H}}
\def\KK{{\mathcal K}}
\def\RR{{\mathbb R}}
\def\ZZ{{\mathbb Z}}
\def\cC{{\mathcal C}}
\def\cD{{\mathcal D}}
\newcommand\cH{{\mathcal H}}
\def\cV{{\mathcal V}}
\def\cZ{{\mathcal Z}}
\def\qdeg{{\rm qdeg}}
\def\tdeg{{\rm tdeg}}
\newcommand{\Ch}{{\rm Ch}}
\newcommand{\CCh}{{\rm CCh}}
\def\rank{{\rm rank\ }}
\numberwithin{equation}{section}
\begin{document}


\title{On irregular binomial $D$--modules}\thanks{Partially
supported by MTM2007-64509, MTM2010-19336 and FEDER, FQM333. MCFF
supported by a grant from Iceland, Liechtenstein and Norway through
the EEA Financial Mechanism. Supported and coordinated by
Universidad Complutense de Madrid.}

\author{Mar\'ia-Cruz Fern\'andez-Fern\'andez}
\address{Centre of Mathematics for Applications (University of Oslo)
and Department of Algebra (University of Sevilla).}
\email{mcferfer@algebra.us.es}

\author{Francisco-Jes\'us Castro-Jim\'enez}
\address{Department of Algebra (University of Sevilla).} \email{castro@algebra.us.es}

\maketitle

\begin{center}
\today
\end{center}

\begin{abstract}
We prove that a holonomic binomial $D$--module $M_A (I,\beta )$ is
regular if and only if certain associated primes of $I$ determined
by the parameter vector $\beta\in \CC^d$ are homogeneous. We further 
describe the slopes of $M_A (I,\beta )$ along a coordinate subspace
in terms of the known slopes of some related hypergeometric
$D$--modules that also depend on $\beta$. When the parameter $\beta$
is generic, we also compute the dimension of the generic stalk of
the irregularity of $M_A(I,\beta)$ along a coordinate hyperplane and
provide some remarks about the construction of its Gevrey solutions.
\end{abstract}

\section{Introduction}
Binomial $D$-modules have been introduced by A. Dickenstein, L.F.
Matusevich and E. Miller in \cite{DMM}. These objects generalize
both GKZ hypergeometric $D$-modules \cite{GGZ87, GZK89} and
(binomial) Horn systems, as treated in \cite{DMM} and
\cite{Sadikov2002}.

Here $D$ stands for the complex Weyl algebra of order $n$, where
$n\geq 0$ is an integer. Elements in $D$ are linear partial
differential operators; such an operator $P$ can be written as a
finite sum
$$P=\sum_{\alpha, \gamma} p_{\alpha \gamma } x^\alpha \partial^\gamma $$
where $p_{\alpha \gamma}\in \C$, $\alpha=(\alpha_1, \ldots,
\alpha_n), \gamma =(\gamma_1,\ldots,\gamma_n)\in \N^n$ and $x^\alpha
= x_1^{\alpha_1} \cdots x_n^{\alpha_n}$, $\partial^\gamma =
\partial_1^{\gamma_1}\cdots
\partial_n^{\gamma_n}$. The partial derivative $\frac{\partial}{\partial
x_i}$ is just denoted by $\partial_i$.

Our input is a pair $(A,\beta)$ where $\beta$ is a vector in $\C^d$
and $A=(a_{ij})\in \Z^{d\times n}$ is a matrix whose columns
$a_1,\ldots,a_n$ span the $\Z$-module $\Z^d$. We also assume that
all $a_i\neq 0$ and that the cone generated by the columns in $\R^n$
contains no lines (one says in this case that this cone is {\em
pointed}).

The polynomial ring $\C[\partial]:=\C[\partial_1,\ldots,\partial_n]$
is a subring of the Weyl algebra $D$. The matrix $A$ induces a
$\Z^d$-grading on $\C[\partial]$ (also called the $A$-grading) by
defining $\deg(\partial_i)=-a_i$.

A binomial in $\C[\partial]$ is a polynomial with at most two
monomial terms. An ideal $I$ in $\C[\partial]$ is said to be
binomial is it is generated by binomials. We also say that the ideal
$I$ is an $A$-graded ideal if it is generated by $A$-homogenous
elements (equivalently if for every polynomial in $I$ all its
$A$-graded components are also in $I$).

The matrix $A$ also induces a $\Z^d$-grading on the Weyl algebra $D$
(also called the $A$-grading) by defining $\deg(\partial_i)=-a_i$
and $\deg(x_i)=a_i$.

To the matrix $A$ one associates the toric ideal $I_A \subset
\C[\partial]$ generated by the family of binomials $\partial^u
-\partial^v$ where $u,v\in \N^n$ and $Au=Av$. The ideal $I_A$ is a
prime $A$-graded ideal.

Recall that to the pair $(A,\beta)$ one can associate  the GKZ
hypergeometric ideal $$H_A(\beta)=DI_A + D(E_1-\beta_1,\ldots,
E_d-\beta_d) $$ where $E_i= \sum_{j=1}^n a_{ij}x_j\partial_j$
is the $i^{\rm{th}}$ Euler operator associated with $A$. The
corresponding GKZ hypergeometric $D$--module is nothing but the
quotient (left) $D$--module $M_A(\beta):= \frac{D}{H_A(\beta)}$, \cite{GGZ87}, \cite{GZK89}.

Following \cite{DMM}, for any $A$--graded binomial ideal $I\subset
\C[\partial]$ we denote by $H_A(I,\beta)$ the $A$-graded left ideal
in $D$ defined by $$H_A(I,\beta)=DI + D(E_1-\beta_1,\ldots,
E_d-\beta_d).$$ The binomial $D$--module associated with the
triple $(A,\beta, I)$ is, by definition, the quotient
$M_A(I,\beta):=\frac{D}{H_A(I,\beta)}$. Notice that the ideal
$H_A(I_A,\beta)$ is nothing but the GKZ hypergeometric ideal
$H_A(\beta)$.

In \cite{DMM} the authors have answered  essential questions about
binomial $D$--modules. The main treated questions are related to the
holonomicity of the systems and to the dimension of their
holomorphic solution space around a non singular point. In
particular, in  \cite[Theorem 6.3]{DMM} they prove  that the
holonomicity of $M_A(I,\beta)$ is equivalent to regular holonomicity
when $I$ is standard $\ZZ$-graded (i.e., the row-span of $A$
contains the vector $(1,\ldots,1)$). However, it turns out that the
final sentence in \cite[Theorem 6.3]{DMM}, stating that the regular
holonomicity of $M_A(I,\beta)$ for a given parameter $\beta$ implies
standard homogeneity of the ideal $I$, is true for binomial Horn
systems but it is not for general binomial $D$--modules. This is
shown by Examples \ref{counterexample1} and \ref{counterexample2}.

These two Examples are different in nature. More precisely, the
system $M_A (I, \beta)$ considered in Example \ref{counterexample1}
is regular holonomic for parameters $\beta$ outside a certain line
in the affine complex plane and irregular otherwise, while the
system considered in Example \ref{counterexample2} is regular
holonomic for all parameters despite the fact that the binomial
ideal $I$ is not homogeneous with respect to the standard
$\Z$--grading. This is a surprising phenomenon since it is not
allowed neither for GKZ hypergeometric systems nor for binomial Horn
systems.

We further provide, in Theorem \ref{regularity},  a characterization of the regular holonomicity of
a system $M_A(I,\beta)$ that  improves the
above mentioned result of \cite[Th. 6.3]{DMM}.

A central  question in the study of the irregularity of a
holonomic $D$-module $M$ is the computation of its slopes along
smooth hypersurfaces (see \cite{Meb90} and \cite{Laurent-Mebkhout}).
On the other hand, the Gevrey solutions of $M$ along smooth
hypersurfaces are closely related with the irregularity and the
slopes of $M$. More precisely, the classes of these Gevrey series
solutions of $M$ modulo convergent series define the 0-th cohomology
group of the irregularity of $M$ \cite[D\'{e}finition 6.3.1]{Meb90}.

In Section \ref{section-slopes} we describe the $L$--characteristic
variety and the slopes of $M_A(I,\beta)$ along coordinate subspaces
in terms of the same objects of the binomial $D$--modules associated
with some of the {\em toral} primes of the ideal $I$ determined by
$\beta$  (see Theorem \ref{slopes}). The binomial $D$--module
associated with a toral prime is essentially a GKZ hypergeometric
system and the $L$--characteristic variety and the slopes along
coordinate subspaces of such a system are completely described in
\cite{SW} in a combinatorial way (see also \cite{Castro-Takayama-03}
and \cite{Hartillo03,Hartillo04} for the cases $d=1$ and $n=d+1$).

Gevrey solutions of hypergeometric systems along coordinate
subspaces are described in \cite{F} (see also \cite{FC1},
\cite{FC2}). In Section \ref{Section-Gevrey} we compute the
dimension of the generic stalk of the irregularity of binomial
$D$-modules when the parameter is generic (see Theorem
\ref{theorem-dim-irreg}). We finally give a procedure to compute
Gevrey solutions of $M_A(I,\beta)$ by using known results in the
hypergeometric case (\cite{GZK89}, \cite{SST} and \cite{F}).

We are grateful to Ezra Miller for his useful suggestions and comments.

\section{Preliminaries on Euler--Koszul homology, binomial primary decomposition and toral and Andean modules}

We review here some definitions, notations and results of \cite{ES},
\cite{MMW}, \cite{DMM} and \cite{DMM2} that will be used in the
sequel.

We will denote $R=\C[\partial]$. Recall that the $A$--grading on the
ring $R$ is defined by $\deg(\partial_j)=-a_j$ where $a_j$ is the
$j^{{\rm th}}$-column of $A$. This $A$--grading on $R$ can be
extended to the ring $D$ by setting $\deg(x_j)=a_j$.

\begin{definition}{\rm \cite[Definition 2.4]{DMM}}
Let $V=\oplus_{\alpha\in \Z^d} V_\alpha$ be an $A$-graded $R$-module. The set of true degrees of $V$
is
$$\tdeg(V )=\{\alpha \in \Z^d : \; V_{\alpha} \neq 0\}$$ The set
of quasidegrees of $V$ is the Zariski closure in $\C^d$ of $\tdeg(V
)$.
\end{definition}

\noindent {\em Euler-Koszul complex $\KK_\bullet(E-\beta; V)$
associated with an $A$-graded $R$--module $V$}.

For any $A$--graded left $D$--module $N=\oplus_{\alpha\in \ZZ^d}
N_\alpha$ we denote $\deg_i(y)=\alpha_i$ if $y\in N_\alpha$.

The map $E_i-\beta_i : N_\alpha \rightarrow N_\alpha$ defined by
$(E_i-\beta_i)(y)= (E_i-\beta_i -\alpha_i)y$ can be extended (by
$\C$--linearity) to a morphism of left $D$--modules $E_i-\beta_i : N
\rightarrow N$. We denote by $E-\beta$ the sequence of commuting
endomorphisms $E_1-\beta_1, \ldots, E_d-\beta_d$. This allows us to
consider the Koszul complex $K_\bullet(E-\beta, N)$ which is
concentrated in homological degrees $d$ to $0$.

\begin{definition} {\rm \cite[Definition 4.2]{MMW}} For any
$\beta \in \C^d$ and any $A$-graded $R$--module $V$, the
Euler-Koszul complex $\KK_\bullet(E-\beta, V)$ is the Koszul complex
$K_\bullet(E-\beta, D\otimes_R V)$. The $i^{{\rm th}}$ Euler-Kozsul
homology of $V$, denoted by $\HH_i(E-\beta,V)$, is the homology
$H_i(\KK_\bullet(E-\beta, V))$.
\end{definition}

\begin{remark}\label{Euler-Koszul-shift}
Recall that we have the $A$--graded isomorphism $\HH_i(E-\beta,V)
(\alpha ) \simeq \HH_i(E-\beta +\alpha ,V)(\alpha )$ for all $\alpha
\in \Z^d$ \cite{MMW}. Here $V(\alpha )$ is nothing but $V$ with the
shifted $A$--grading $V(\alpha )_{\gamma }=V_{\alpha +\gamma }$ for
all $\gamma \in \Z^d$.
\end{remark}

\noindent {\em Binomial primary decomposition for binomial ideals.}

We recall from \cite{ES} that for any sublattice $\Lambda\subset
\ZZ^n$ and any partial character $\rho : \Lambda \rightarrow \C^*$,
the corresponding associated binomial ideal is $$I_\rho = \langle
\partial^{u_+} - \rho(u) \partial^{u_-} \, \vert \, u=u_+-u_-\in \Lambda\rangle$$
where $u_+$ and $u_-$ are in $\N^n$ and they have disjoint supports.
The ideal $I_\rho$ is prime if and only if $\Lambda$ is a saturated
sublattice of $\Z^n$  ({\em i.e.} $\Lambda=\Q \Lambda \cap \Z^n$).
We know from \cite[Corollary 2.6]{ES} that any binomial prime ideal
in $R$ has the form $I_{\rho,J}:=I_\rho + {\mathfrak m}_J$ (where
${\mathfrak m}_J = \langle \partial_j\, \vert \, j\not\in J\rangle$)
for some partial character $\rho $ whose domain is a saturated
sublattice of $\ZZ^J$ and some $J\subset \{1,\ldots,n\}$.

For any $J\subset \{1,\ldots,n\}$ we denote by $\partial_J$ the
monomial $\prod_{j\in J} \partial_j$.

\begin{theorem}\label{binomial-primary-decomposition} {\rm \cite[Theorem 3.2]{DMM2}} Fix a binomial ideal $I$
in $R$. Each associated binomial prime $I_{\rho,J}$ has an
explicitly defined monomial ideal $U_{\rho,J}$ such that
$$I=\bigcap_{I_{\rho,J}\in Ass(I)} \cC_{\rho,J}$$ for $\cC_{\rho,J}
= \left((I+I_\rho):\partial_J^\infty\right) + U_{\rho,J}$, is a
primary decomposition of $I$ as an intersection of $A$--graded
primary binomial ideals.
\end{theorem}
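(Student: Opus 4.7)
The plan is to build the decomposition in two stages---first cellular, then character-wise---and to identify the monomial ideal $U_{\rho,J}$ via quasidegrees of an auxiliary $A$-graded module. For the first stage I would invoke the cellular decomposition of Eisenbud--Sturmfels \cite{ES}: writing
$$I=\bigcap_J I[J],\qquad I[J]=\bigl(I+\langle \partial_j^N:j\notin J\rangle\bigr):\partial_J^\infty \quad(N\gg 0),$$
only those $J\subset\{1,\ldots,n\}$ for which $\partial_J$ is a nonzerodivisor and each $\partial_j$, $j\notin J$, is nilpotent modulo $I[J]$ need to be retained. Localising $I[J]$ at $\partial_J$ yields a binomial ideal in the Laurent ring $R[\partial_J^{-1}]$ which, by the standard classification of binomial ideals over an algebraically closed field, equals $I_\rho R[\partial_J^{-1}]$ for a partial character $\rho$ on some sublattice $\Lambda\subset \Z^J$. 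Saturating $\Lambda$ produces finitely many characters $\rho_1,\ldots,\rho_s$ on $\Lambda^{\sat}$ extending $\rho$, and each $I_{\rho_i,J}$ appears in $Ass(I)$.

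Next, for each pair $(\rho_i,J)$, I would construct the candidate primary component
$$\cC_{\rho_i,J}=\bigl((I+I_{\rho_i}):\partial_J^\infty\bigr)+U_{\rho_i,J}.$$
The saturation piece has radical $I_{\rho_i,J}$ and is $I_{\rho_i,J}$-primary after inverting $\partial_J$, but may acquire embedded primes $I_{\rho',J'}$ with $J\subsetneq J'$ coming from the auxiliary monomial part of the cellular ideal. The natural recipe is to take
$$U_{\rho_i,J}=\langle \partial^v : -Av \in \qdeg(W_{\rho_i,J})\rangle$$
for an $A$-graded subquotient $W_{\rho_i,J}$ of $R/((I+I_{\rho_i}):\partial_J^\infty)$ whose support coincides with the union of the other associated primes living on cells strictly containing $J$; this is exactly what is needed to erase the embedded monomial contributions while preserving the $A$-grading.

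The main obstacle is to verify that this choice of $U_{\rho_i,J}$ simultaneously (i) upgrades $I_{\rho_i,J}$-primarity of $\cC_{\rho_i,J}$ from $R[\partial_J^{-1}]$ to $R$ itself, and (ii) makes the intersection $\bigcap_{(\rho,J)\in Ass(I)}\cC_{\rho,J}$ equal to $I$ on the nose, not just up to a monomial ambiguity. Both points require a careful $A$-graded tracking of witnesses for associated primes: the toric contribution at the cell $J$ is governed by the character $\rho$, while the monomial contribution is governed by the quasidegrees of the associated primes on cells strictly larger than $J$. In effect the primary decomposition reduces to a finite bookkeeping problem over the poset of cells, which is the combinatorial core of Theorem \ref{binomial-primary-decomposition}.
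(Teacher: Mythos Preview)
This theorem is not proved in the paper at all: it appears in Section~2 (``Preliminaries'') as a quoted result, explicitly attributed to \cite[Theorem 3.2]{DMM2}, and no argument is given. There is therefore no proof in the paper to compare your proposal against.

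That said, your outline is broadly in the spirit of the argument in \cite{DMM2}: one first passes to a cellular decomposition \`a la Eisenbud--Sturmfels, then refines each cellular piece using the saturations of the relevant partial characters, and finally adds a carefully chosen monomial ideal $U_{\rho,J}$ to kill embedded components while preserving the $A$-grading. Your description of $U_{\rho,J}$ in terms of quasidegrees of an auxiliary module is on the right track, but as you yourself flag, the two verification steps (primarity over $R$ rather than just $R[\partial_J^{-1}]$, and exact reconstruction of $I$ as the intersection) are the substantive content and are only gestured at here. If you want a self-contained proof you should consult \cite{DMM2} directly; the present paper simply takes the statement as input.
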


\noindent {\em Toral and Andean modules}.

In \cite[Definition 4.3]{DMM2} a finitely generated $A$-graded
$R$--module $V=\oplus V_\alpha$ is said to be {\em toral} if its
Hilbert function $H_V$ (defined by $H_V(\alpha)=\dim_\C V_\alpha$
for $\alpha \in \Z^d$) is bounded above.

With the notations above, a $R$--module of type $R/I_{\rho, J}$ is
toral if and only if its Krull dimension equals the rank of the
matrix $A_J$ (see \cite[Lemma 3.4]{DMM}). Here $A_J$ is the
submatrix of $A$ whose columns are indexed by $J$. In this case the
module $R/\cC_{\rho, J}$ is toral and we say that the ideal
$I_{\rho,J}$ is a toral prime and  $\cC_{\rho,J}$ is a toral primary
component.

If $\dim (R/I_{\rho,J}) \not= \rank(A_J)$ then the module
$R/\cC_{\rho, J}$ is said to be {\em Andean}, the ideal $I_{\rho,J}$
is an {\em Andean} prime and $\cC_{\rho,J}$ is an {\em Andean}
primary component.

An $A$--graded $R$--module $V$ is said to be {\em natively toral} if
there exist a binomial toral prime ideal $I_{\rho,J}$ and an element
$\alpha\in \Z^d$ such that $V(\alpha)$ is isomorphic to $R/I_{\rho,
J}$ as $A$--graded modules (see \cite[Definition 4.1]{DMM}).

\begin{proposition}{{\rm \cite[Proposition 4.2]{DMM}}}
An $A$--graded $R$--module $V$ is toral if and only if it has a
filtration $$0=V_0 \subset V_1 \subset \cdots \subset V_{\ell -1}
\subset V_\ell =V$$ whose successive quotients $V_k/V_{k-1}$ are all
natively toral.
\end{proposition}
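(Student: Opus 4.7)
For the ``if'' direction I would argue by induction on the length $\ell$ of the filtration, using additivity of the Hilbert function. The short exact sequence $0\to V_{\ell-1}\to V_\ell\to V_\ell/V_{\ell-1}\to 0$ yields $H_{V_\ell}=H_{V_{\ell-1}}+H_{V_\ell/V_{\ell-1}}$; the inductive hypothesis bounds $H_{V_{\ell-1}}$, and each natively toral quotient is a shift of some $R/I_{\rho,J}$ with $I_{\rho,J}$ a toral prime. The defining rank condition $\dim(R/I_{\rho,J})=\rank(A_J)$ forces the saturated lattice $\Lambda$ attached to $\rho$ to coincide with $\ker(A_J)\cap\Z^J$, so the $A$-graded pieces of $R/I_{\rho,J}$ have dimension at most one. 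Thus $H_V$ is bounded and $V$ is toral.

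For the converse I would proceed by Noetherian induction on $V$: given a nonzero toral $V$, I would produce a nonzero natively toral submodule $V_1\subseteq V$ with $V/V_1$ again toral, and then iterate. To find $V_1$ I would choose an $A$-graded associated prime $P$ of $V$ and an $A$-homogeneous element $v$ of some degree $\alpha$ whose annihilator ideal is exactly $P$; this gives an $A$-graded embedding $(R/P)(-\alpha)\hookrightarrow V$ with image $V_1:=Rv$. Additivity of the Hilbert function forces both $R/P$ and $V/V_1$ to be toral, so the induction closes as soon as $P$ is identified with a toral binomial prime $I_{\rho,J}$; termination is then automatic from noetherianity of $R$ and finite generation of $V$.

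The main obstacle is precisely this identification: showing that an $A$-graded prime $P$ of $R$ with $R/P$ toral must equal $I_{\rho,J}$ for some toral pair $(\rho,J)$. My plan is a geometric argument: the $A$-grading corresponds to the action of $T=(\C^*)^d$ on $\C^n=\mathrm{Spec}(R)$ induced by $A$, so $A$-graded primes of $R$ parametrize irreducible $T$-invariant closed subvarieties of $\C^n$. By the structural result of Eisenbud-Sturmfels \cite{ES}, these are exactly the subvarieties cut out by the binomial primes $I_{\rho,J}$; boundedness of the Hilbert function of $R/P$ then forces $\Lambda$ to be saturated of maximal rank, equivalently $\Lambda=\ker(A_J)\cap\Z^J$ and $\dim(R/P)=\rank(A_J)$, so $(\rho,J)$ is toral. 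With this identification in hand, the Noetherian induction produces the required filtration.
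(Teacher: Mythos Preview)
The paper does not prove this proposition; it is quoted from \cite[Proposition~4.2]{DMM} and used as a black box, so there is no in-paper proof to compare against. I can only assess your argument on its own merits.

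Your ``if'' direction is fine. The problem is in the converse, exactly at the step you flag as the main obstacle. You assert that, by Eisenbud--Sturmfels, the irreducible $T$-invariant closed subvarieties of $\C^n$ (equivalently, the $A$-graded primes of $R$) are precisely those cut out by the binomial primes $I_{\rho,J}$. That is not what \cite{ES} says: their result classifies \emph{binomial} primes as the $I_{\rho,J}$, but an $A$-graded prime need not be binomial. For instance, with $A=(1\ 1\ 1)$ the ideal $P=(\partial_1+\partial_2+\partial_3)$ is $A$-homogeneous and prime, and the hyperplane $V(P)$ is invariant under the diagonal $\C^*$-action induced by $A$, yet $P$ is not binomial. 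Until you know that your associated prime $P$ is binomial you cannot speak of a lattice $\Lambda$ or invoke the ``saturated of maximal rank'' criterion, so the last paragraph of your proposal does not get off the ground.

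What is actually required is the implication ``$P$ an $A$-graded prime with $R/P$ toral $\Rightarrow$ $P$ is a toral binomial prime $I_{\rho,J}$.'' In the example above $R/P$ has Hilbert function $k\mapsto k+1$, so it is not toral and the implication is not contradicted there; but it still has to be \emph{proved}. One route: set $J=\{j:\partial_j\notin P\}$, so $V(P)\cap(\C^*)^J$ is dense in $V(P)$ and every $T$-orbit there has dimension $\rank A_J$; boundedness of the Hilbert function forces $\dim(R/P)\le\rank A_J$ (the true degrees of $R/P$ lie in $-\N A_J$), hence $V(P)$ is the closure of a single $T$-orbit, and such orbit closures are cut out by binomial equations with lattice $\ker(A_J)\cap\Z^J$. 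With this lemma in hand your Noetherian-induction scheme goes through; as written, the argument has a genuine gap at this point.
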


Such a filtration on $V$ is called a {\em toral filtration}.

Following \cite[Definition 5.1]{DMM} an $A$-graded $R$-module $V$ is said
to be {\em natively Andean}  if there is an $\alpha \in \Z^d$ and an
Andean quotient ring $R/I_{\rho,J}$ over which $V (\alpha)$ is
torsion-free of rank 1 and admits a $\Z^J/\Lambda$-grading that refines
the $A$-grading via $\Z^J/\Lambda \rightarrow \Z^d = \Z A$, where $\rho$
is defined on $\Lambda \subset \Z^J$. Moreover, if $V$ has a finite
filtration  $$0=V_0 \subset V_1 \subset \cdots \subset V_{\ell -1}
\subset V_\ell =V$$ whose successive quotients $V_k/V_{k-1}$ are all
natively Andean, then $V$ is Andean (see \cite[Section 5]{DMM}).

In \cite[Example 4.6]{DMM2} it is proven that the quotient
$R/C_{\rho,J}$ is Andean for any Andean primary component
$C_{\rho,J}$  of any $A$-graded binomial ideal.

We finish this section with the definition and a result about the
so-called {\em Andean arrangement} associated with an $A$-graded
binomial ideal $I$ in $R$. Let us fix an irredundant primary
decomposition $$I=\bigcap_{I_{\rho,J}\in Ass(I)} \cC_{\rho,J}$$ as
in Theorem \ref{binomial-primary-decomposition}.

\begin{definition}\cite[Definition 6.1]{DMM} The Andean arrangement $\cZ_{\rm Andean}(I)$ is the union
of the quasidegree sets $\qdeg(R/C_{\rho,J} )$ for the Andean
primary components $C_{\rho,J}$ of $I$.
\end{definition}

From \cite[Lemma 6.2]{DMM} the Andean arrangement $\cZ_{\rm Andean}(I)$ is a union of
finitely many integer translates of the subspaces $\C A_J \subset
\C^n$ for which there is an Andean associated prime $I_{\rho,J}$.

From \cite[Theorem 6.3]{DMM} we have that the binomial $D$--module $M_A(I,\beta)$ is holonomic if and only if
$-\beta \notin \cZ_{\rm Andean}(I)$.

\section{Characterizing regular holonomic binomial $D$--modules}

Let $I$ be an $A$--graded binomial ideal and fix a binomial primary
decomposition $I=\cap_{\rho ,J} C_{\rho , J}$ where $ C_{\rho , J}$
is a $I_{\rho ,J}$--primary binomial ideal.

Let us consider the ideal $$I_{\beta}:=\bigcap_{-\beta \in
\qdeg(R/C_{\rho ,J})} C_{\rho ,J}$$ i.e., the intersection of all
the primary components $C_{\rho ,J}$ of $I$ such that $-\beta$ lies
in the quasidegrees set of the module $R/C_{\rho ,J}$.

\begin{remark}
Notice that if $-\beta  \notin \cZ_{\rm Andean}(I)$ then $R
/I_{\beta}$ is contained in the toral direct sum
$$\bigoplus_{-\beta \in \qdeg(R/C_{\rho,J})} R /C_{\rho,J}$$
and so it is a toral module.
\end{remark}

The following result generalizes \cite[Proposition 6.4]{DMM}.

\begin{proposition}\label{isomorphism-I-beta}
If $-\beta \notin \mathcal{Z}_{\rm Andean}(I)$ then the natural
surjection $R/I \twoheadrightarrow R/I_{\beta}$ induces a isomorphism in Euler--Koszul
homology
$$\mathcal{H}_i (E-\beta , R/ I) \simeq  \mathcal{H}_i (E-\beta , R/
I_{\beta})$$ for all $i$. In particular, $M_A (I ,\beta )\simeq M_A
(I_{\beta},\beta )$.
\end{proposition}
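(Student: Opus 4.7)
The plan is to realize the surjection $R/I\twoheadrightarrow R/I_\beta$ as the final map of a short exact sequence whose kernel has identically vanishing Euler--Koszul homology at the parameter $\beta$, and then invoke the long exact sequence in Euler--Koszul homology.

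First I would split the fixed primary decomposition of $I$ into the ``active'' part $I_\beta$ and the ``inactive'' part $I':=\bigcap_{-\beta\notin\qdeg(R/C_{\rho,J})} C_{\rho,J}$, so that $I = I_\beta\cap I'$ and $I\subseteq I_\beta$. This produces the short exact sequence of finitely generated $A$--graded $R$--modules
$$0 \longrightarrow I_\beta/I \longrightarrow R/I \longrightarrow R/I_\beta \longrightarrow 0.$$
Because $D$ is free as a right $R$--module, the Euler--Koszul complex $\KK_\bullet(E-\beta,\blank)$ sends this to a short exact sequence of complexes of $D$--modules, yielding a long exact sequence in Euler--Koszul homology. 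Thus the assertion reduces to proving $\HH_i(E-\beta, I_\beta/I)=0$ for all $i$.

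Next, by the vanishing criterion of \cite[Proposition 5.3]{MMW} (valid for every finitely generated $A$--graded $R$--module), it suffices to show $-\beta\notin\qdeg(I_\beta/I)$. For this I would use the Second Isomorphism Theorem to identify $I_\beta/I = I_\beta/(I_\beta\cap I') \hookrightarrow R/I'$, and then the standard injection $R/I'\hookrightarrow \bigoplus_{-\beta\notin\qdeg(R/C_{\rho,J})} R/C_{\rho,J}$ coming from the primary decomposition of $I'$, which together force
$$\qdeg(I_\beta/I)\;\subseteq\;\bigcup_{-\beta\notin\qdeg(R/C_{\rho,J})}\qdeg(R/C_{\rho,J}).$$
By the very choice of $I'$, the right-hand side does not contain $-\beta$, giving the desired vanishing. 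The ``in particular'' claim is then the $i=0$ instance, since $\HH_0(E-\beta, R/J)\simeq D/H_A(J,\beta)$ for any $A$--graded ideal $J$.

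The main delicate point is ensuring the MMW vanishing criterion applies in full generality: the kernel $I_\beta/I$ need not be toral, because $I'$ can contribute both toral and Andean primary components, but the criterion is stated for arbitrary finitely generated $A$--graded $R$--modules, so this causes no obstruction. The hypothesis $-\beta\notin\cZ_{\rm Andean}(I)$ is not used in the vanishing argument itself; it serves only to guarantee that the modules $M_A(I,\beta)$ and $M_A(I_\beta,\beta)$ appearing in the conclusion are holonomic, so that the resulting isomorphism lives in the intended class.
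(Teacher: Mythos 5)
Your proof is correct and reaches the conclusion by a more direct route than the paper does, so a comparison is worthwhile. The paper's argument first invokes \cite[Proposition 6.4]{DMM} to replace $R/I$ by $R/I_{\rm toral}$ (this is where the hypothesis $-\beta\notin\cZ_{\rm Andean}(I)$ is consumed), and only then sets up a short exact sequence analogous to yours, now with purely toral ingredients, so that the vanishing needed for the kernel can be obtained from the \emph{toral} vanishing criterion, \cite[Theorem 4.5]{DMM}. You instead go straight to the short exact sequence $0\to I_\beta/I\to R/I\to R/I_\beta\to 0$ and dispatch the kernel via the implication ``$-\beta\notin\qdeg(V)\Rightarrow\HH_\bullet(E-\beta,V)=0$'' applied directly to $I_\beta/I$. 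Your embedding $I_\beta/I\hookrightarrow R/I'\hookrightarrow\bigoplus_{-\beta\notin\qdeg(R/C_{\rho,J})}R/C_{\rho,J}$ and the resulting quasidegree bound are correct, as is the reduction via the long exact sequence and the identification $\HH_0(E-\beta,R/J)\cong D/H_A(J,\beta)$. A genuine payoff of your route is the observation, absent from the paper, that the Andean hypothesis plays no role in obtaining the isomorphism itself; the paper needs it only because it funnels through \cite[Proposition 6.4]{DMM}, whose proof throws away the Andean components and therefore requires $-\beta$ to avoid their quasidegrees.

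The one point you should tighten is the citation and scope of the vanishing criterion. The module $I_\beta/I$ is a submodule of a direct sum of toral \emph{and} Andean quotients $R/C_{\rho,J}$, so it is in general neither toral nor Andean, and you need the implication ``$-\beta\notin\qdeg(V)\Rightarrow\HH_\bullet(E-\beta,V)=0$'' for such mixed modules. The paper sidesteps this by reducing to the toral case and citing \cite[Theorem 4.5]{DMM}; you assert it in full generality from \cite[Proposition 5.3]{MMW}. That implication is indeed true for every finitely generated $A$-graded $R$-module (it is the ``easy'' direction of the MMW degree criterion — the converse is what requires torality), but you should either verify that the quoted proposition of \cite{MMW} is stated at that level of generality, or instead filter $I_\beta/I$ by intersecting with the summands $R/C_{\rho,J}$ so that each graded piece is a submodule of a single toral or Andean module and then apply \cite[Theorem 4.5]{DMM} or \cite[Lemma 5.4]{DMM} respectively. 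Either way the argument closes; just be explicit about which vanishing statement you are invoking and why it applies to $I_\beta/I$.
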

\begin{proof}
By \cite[Proposition 6.4]{DMM} we have that
$$\mathcal{H}_i (E-\beta , R/ I) \simeq  \mathcal{H}_i (E-\beta , R/
I_{{\rm toral}})$$ for all $i$, where $I_{{\rm toral}}$ denotes the intersection
of all the toral primary components of $I$. Thus, we can assume
without loss of generality that all the primary components of $I$
are toral. The rest of the proof is now analogous to the proof of
\cite[Proposition 6.4]{DMM} if we substitute the ideals $I_{{\rm toral}}$
and $I_{\rm Andean}$ there by the ideals $I_{\beta}$ and
$\overline{I_{\beta}}$ respectively, where
$$\overline{I_{\beta}}=\bigcap_{-\beta \notin \qdeg(R/C_{\rho ,J})} C_{\rho ,J},$$  and the Andean direct sum
$\bigoplus_{I_{\rho , J} {\rm Andean}} R/C_{\rho ,J}$ there by the toral
direct sum $$\bigoplus_{-\beta \notin \qdeg(R/C_{\rho ,
J})} R/C_{\rho ,J}$$ Finally, we can use Lemma 4.3 and Theorem 4.5
in \cite{DMM} in a similar way as \cite[Lemma 5.4]{DMM} is used in
the proof of Proposition 6.4 of \cite{DMM}.
\end{proof}

The following Lemma gives a description of the quasidegrees set of a
toral module of type $R/C_{\rho,J}$. E. Miller has pointed out
that this result follows from Proposition 2.13 and Theorem 2.15 in
\cite{DMM2}. We will include here a slightly different  proof of
this Lemma.

\begin{lemma}\label{qdeg-toral-lemma}
For any $I_{\rho ,J}$--primary toral ideal $C_{\rho ,J}$ the
quasidegrees set of $M=R/C_{\rho ,J}$ equals the union
of at most $\mu_{\rho ,J}$ $\Z^d$--graded translates of $\C A_J$,
where $\mu_{\rho ,J}$ is the multiplicity of $I_{\rho ,J}$ in
$C_{\rho ,J}$. More precisely, for any toral filtration $0=M_0
\subseteq M_1 \subseteq \cdots \subseteq M$ we have that the
quasidegrees set of $M$ is the union of the quasidegrees set of all
the successive quotients $M_i /M_{i-1}$ that are isomorphic to
$\Z^d$--graded translates of $R/I_{\rho ,J}$.
\end{lemma}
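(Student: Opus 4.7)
The plan is to combine the given toral filtration of $M=R/C_{\rho,J}$ with the $I_{\rho,J}$--primary hypothesis to pin down which subquotients actually contribute to the quasidegrees. First, for any short exact sequence $0\to V'\to V\to V''\to 0$ of $A$-graded $R$-modules one has $\tdeg(V)=\tdeg(V')\cup\tdeg(V'')$, hence $\qdeg(V)=\qdeg(V')\cup\qdeg(V'')$; iterating along the filtration yields $\qdeg(M)=\bigcup_{i=1}^{\ell}\qdeg(M_i/M_{i-1})$. Each natively toral quotient $M_i/M_{i-1}\cong R/I_{\rho_i,J_i}(\alpha_i)$ has true degrees $-\alpha_i-\N A_{J_i}$, which is Zariski-dense in $-\alpha_i+\CC A_{J_i}$, so $\qdeg(M_i/M_{i-1})=-\alpha_i+\CC A_{J_i}$.

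Second, I would compare $I_{\rho_i,J_i}$ with $I_{\rho,J}$. Since $C_{\rho,J}$ annihilates $M$ and therefore each successive quotient, $I_{\rho_i,J_i}=\sqrt{\mathrm{Ann}(M_i/M_{i-1})}\supseteq\sqrt{C_{\rho,J}}=I_{\rho,J}$. Writing $I_{\rho,J}=I_\rho+\mathfrak m_J$ and $I_{\rho_i,J_i}=I_{\rho_i}+\mathfrak m_{J_i}$, and using that $\partial_k\notin I_{\rho_i}$ for any $k\in J_i$ (because $R/I_{\rho_i}$ is a domain in which $\partial_k$ maps to the nonzero element indexed by $a_k$), this containment forces $J_i\subseteq J$ and hence $\CC A_{J_i}\subseteq\CC A_J$. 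To count multiplicities, localize at $P=I_{\rho,J}$: additivity of length in exact sequences gives $\mu_{\rho,J}=\mathrm{length}_{R_P}(M_P)=\sum_i\mathrm{length}_{R_P}((R/I_{\rho_i,J_i})_P)$, where the $i$-th summand equals $1$ when $I_{\rho_i,J_i}=I_{\rho,J}$ (then $(R/I_{\rho_i,J_i})_P$ is the residue field) and $0$ when $I_{\rho_i,J_i}\supsetneq I_{\rho,J}$ (some generator of $I_{\rho_i,J_i}$ outside $P$ becomes a unit after localization). Hence the number of indices with $I_{\rho_i,J_i}=I_{\rho,J}$ equals exactly $\mu_{\rho,J}$.

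The main obstacle is to absorb the ``small'' subquotients with $J_i\subsetneq J$ into the ``large'' ones, showing that $-\alpha_i+\CC A_{J_i}$ lies in the union of the translates $-\alpha_k+\CC A_J$ with $I_{\rho_k,J_k}=I_{\rho,J}$. For this, observe that for every $j\in J$ the variable $\partial_j$ lies outside $I_{\rho,J}$, and since the zero-divisors on the $I_{\rho,J}$--primary module $M$ coincide with $I_{\rho,J}$, multiplication by $\partial_j$ is injective on $M$. Lifting a nonzero homogeneous element of $(M_i/M_{i-1})_{-\alpha_i}$ to some $m\in M_{-\alpha_i}$, the elements $\partial^u m$ are nonzero in $M$ for every $u\in\N^J$, which places $-\alpha_i-\N A_J$ in $\tdeg(M)$; taking Zariski closures gives $-\alpha_i+\CC A_J\subseteq\qdeg(M)=\bigcup_j(-\alpha_j+\CC A_{J_j})$.

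Finally, $-\alpha_i+\CC A_J$ is an irreducible affine subspace of dimension $\rank A_J$, whereas each component with $J_j\subsetneq J$ is of strictly smaller dimension and cannot contain it; irreducibility thus implies $-\alpha_i+\CC A_J\subseteq -\alpha_k+\CC A_J$ for some index $k$ with $I_{\rho_k,J_k}=I_{\rho,J}$, and equality of dimension forces the two affine subspaces to coincide. Therefore $-\alpha_i+\CC A_{J_i}\subseteq -\alpha_i+\CC A_J=-\alpha_k+\CC A_J$, proving that small subquotients contribute no new points to $\qdeg(M)$; together with the multiplicity count, this establishes that $\qdeg(M)$ is the union of at most $\mu_{\rho,J}$ $\ZZ^d$--graded translates of $\CC A_J$, arising exactly from the subquotients isomorphic to shifts of $R/I_{\rho,J}$.
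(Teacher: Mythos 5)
Your argument is correct and follows essentially the same strategy as the paper's proof, with the same two key ingredients: (1) since $C_{\rho,J}$ is primary, the zero-divisors on $M$ are exactly $I_{\rho,J}$, so $\partial_j$ for $j\in J$ acts injectively on $M$, which yields the translation-invariance $\tdeg(M)\supseteq\tdeg(M)-\N A_J$; and (2) the toral subquotients $R/I_{\rho_l,J_l}(\gamma_l)$ with $I_{\rho_l,J_l}\supsetneq I_{\rho,J}$ have strictly smaller Krull dimension $\rank A_{J_l}<\rank A_J$, so their quasidegree sets are too small to account for any $\rank A_J$-dimensional piece on their own. The differences are tactical. The paper first uses (1) together with $\dim\qdeg(M)=\rank A_J$ to write $\qdeg(M)$ directly as a finite union of translates of $\CC A_J$, and then matches these with the filtration; you instead decompose $\qdeg(M)$ via the filtration first and then absorb each ``small'' piece $-\alpha_i+\CC A_{J_i}$ by lifting a degree-$(-\alpha_i)$ element of $M_i/M_{i-1}$ into $M$ and applying the nonzerodivisors $\partial_j$, $j\in J$, to get $-\alpha_i+\CC A_J\subseteq\qdeg(M)$. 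You also supply a localization-at-$I_{\rho,J}$ argument for the multiplicity count that the paper merely asserts, which is a nice addition. One phrase worth tightening: the claim that a subquotient with $J_j\subsetneq J$ has $\rank A_{J_j}<\rank A_J$ is not a consequence of $J_j\subsetneq J$ alone (a proper subset of columns may well have full rank), but rather of the fact that $I_{\rho_j,J_j}\supsetneq I_{\rho,J}$ are both toral primes, so $\rank A_{J_j}=\dim R/I_{\rho_j,J_j}<\dim R/I_{\rho,J}=\rank A_J$; you use this implicitly but should say it.
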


\begin{proof}
Since $M$ is toral we have by \cite[Lemma 4.7]{DMM} that $\dim
(\qdeg(M))=\dim M =\rank A_J$. Since $C_{\rho, J}$ is primary, any
zero-divisor of $M$ is nilpotent. For all $j\in J$ we have that
$\partial_j^m \notin C_{\rho ,J}\subseteq I_{\rho}+\mathfrak{m}_J$
and so $\partial_j$ is not a zero-divisor in $M$ for all $j\in J$.
Thus, the true degrees set of $M$ verifies $\tdeg(M)=\tdeg (M)-\N
A_J$. This and the fact that $\dim (\qdeg (M))=\rank A_J$ imply that
there exists $\alpha_1 ,\ldots ,\alpha_r \in \Z^d$ such that $\tdeg
(M)= \cup_{i=1}^r (\alpha_i -\N A_J)$ and
\begin{equation}
\qdeg (M)= \bigcup_{i=1}^r (\alpha_i + \C  A_J) \label{qdeg-toral}
\end{equation}
Consider now a toral filtration $0=M_0 \subseteq M_1 \subseteq
\cdots \subseteq M$. We know that there are exactly $\mu_{\rho ,J}$
different values of $i$ such that $M_i /M_{i-1}\simeq R/I_{\rho ,J}(\gamma_i )$ for some $\gamma_i\in \ZZ^d$.
For the other successive quotients $M_l
/M_{l-1}\simeq R/I_{\rho_l ,J_l}(\gamma_l )$ we have that $
I_{\rho_l ,J_l}$ is a toral prime which properly contains $I_{\rho
,J}$. In particular, we have that $\rank A_{J_l} =\dim R/I_{\rho_l ,J_l} <\dim R/I_{\rho ,J}= \rank A_J$. Since
$\qdeg(R/I_{\rho_l ,J_l})=\C A_{J_l}$ has dimension $\rank A_{J_l}
<\rank A_J$ and $\qdeg(M)=\bigcup_{i} \qdeg(M_i /M_{i-1})$ we have by
(\ref{qdeg-toral}) that the quasidegrees set of any $M_i/M_{i-1}$ is
contained in the quasidegrees set of some $M_j /M_{j-1}\simeq
R/I_{\rho ,J}(\gamma_j )$. In particular $r\leq \mu_{\rho ,J}$ and
each affine subspace $(\alpha_i +\C A_J)$ in (\ref{qdeg-toral}) is
the quasidegrees set of some $M_j/M_{j-1}\simeq R/I_{\rho
,J}(\gamma_j )$.
\end{proof}

\begin{remark}\label{remark-toral-hypergeometric}
Notice that $H_A (I_{\rho ,J} ,\beta)= D H_{A_J} (I_{\rho},\beta) +
D (\partial_j :\; j\notin J)$. In addition, if $I_{\rho ,J}$ is
toral then the $D_J$--module $M_{A_J}(I_{\rho},\beta)$ is isomorphic
to the hypergeometric system $M_{A_J}(\beta)$ via an $A$--graded
isomorphism of $D_J$--modules induced by rescaling the variables
$x_j$, $j\in J$, using the character $\rho$. Thus we can apply most
of the well-knows results for hypergeometric systems to $M_A
(I_{\rho ,J} ,\beta)$ (with $I_{\rho ,J}$ a toral prime) in an
appropriated form.
\end{remark}

\begin{lemma}\label{regularity-natively-toral}
If $I_{\rho ,J}$ is toral and $-\beta \in \qdeg(R/I_{\rho ,J})$ the following conditions are equivalent:
\begin{enumerate}
\item[i)] $\mathcal{H}_i (E-\beta, R/I_{\rho ,J})$ is regular holonomic for all $i$.
\item[ii)] $\mathcal{H}_0 (E-\beta,R/I_{\rho ,J})$ is regular holonomic.
\item[iii)] $I_{\rho ,J}$ is homogeneous (equivalently $A_J$ is
homogeneous).
\end{enumerate}
\end{lemma}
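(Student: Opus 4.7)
The plan is to reduce the statement to the classical characterization of regular holonomicity for GKZ hypergeometric $D$-modules, combined with the rescaling isomorphism of Remark \ref{remark-toral-hypergeometric}. The implication (i)$\Rightarrow$(ii) is trivial, so I will focus on (ii)$\Leftrightarrow$(iii) and (iii)$\Rightarrow$(i).

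The key reduction I envisage is a tensor-product splitting of the Euler--Koszul complex. Writing $I_{\rho,J}=I_\rho+\mathfrak{m}_J$ with $\mathfrak{m}_J=\langle\partial_j:j\notin J\rangle$ and decomposing the Weyl algebra as $D=D_J\otimes_\C D_{J^c}$, one obtains an $A$-graded isomorphism
\[
D\otimes_R (R/I_{\rho,J}) \;\simeq\; (D_J/D_J I_\rho)\otimes_\C\C[x_j:j\in J^c].
\]
Because $\partial_j$ acts as zero on the second factor for every $j\in J^c$, each $E_i-\beta_i$ should restrict to the $A_J$-Euler operator $E_i^{A_J}-\beta_i$ acting only on the first factor. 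The Euler--Koszul complex will therefore factor as an external tensor product, yielding for every $i$ a $D$-module isomorphism
\[
\mathcal{H}_i(E-\beta,\,R/I_{\rho,J}) \;\simeq\; \mathcal{H}_i^{A_J}(E-\beta,\,R_J/I_\rho)\;\boxtimes\;\mathcal{O}_{\C^{J^c}}.
\]
Since the rescaling of Remark \ref{remark-toral-hypergeometric} fixes each $x_j\partial_j$, it lifts to the full Euler--Koszul complex and replaces $I_\rho$ by $I_{A_J}$ without changing the isomorphism class. As $\mathcal{O}_{\C^{J^c}}$ is regular holonomic and external tensor product with it preserves regular holonomicity in both directions, regular holonomicity of $\mathcal{H}_i(E-\beta,R/I_{\rho,J})$ as a $D$-module will be equivalent to that of $\mathcal{H}_i^{A_J}(E-\beta,R_J/I_{A_J})$ as a $D_J$-module.

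With this reduction both remaining implications become statements about GKZ systems. For (ii)$\Leftrightarrow$(iii), taking $i=0$ gives $M_A(I_{\rho,J},\beta)\simeq M_{A_J}(\beta)\boxtimes\mathcal{O}_{\C^{J^c}}$; regular holonomicity of the left-hand side is then equivalent to that of $M_{A_J}(\beta)$, which by Hotta's theorem and its converse due to Schulze--Walther holds precisely when $A_J$ is homogeneous. The hypothesis $-\beta\in\qdeg(R/I_{\rho,J})=\C A_J$ ensures $M_{A_J}(\beta)\neq 0$, so the equivalence is substantive. For (iii)$\Rightarrow$(i), when $A_J$ is homogeneous, $R_J/I_{A_J}$ becomes a standard $\Z$-graded $R_J$-module, and the strong form of Hotta's theorem recorded in \cite{MMW} should guarantee that every $\mathcal{H}_i^{A_J}(E-\beta,R_J/I_{A_J})$ is regular holonomic; the reduction then transfers this to $\mathcal{H}_i(E-\beta,R/I_{\rho,J})$.

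The main anticipated obstacle is the bookkeeping required to set up the tensor-product decomposition of the Euler--Koszul complex cleanly and to verify that the rescaling of Remark \ref{remark-toral-hypergeometric} extends compatibly to all higher homology modules, rather than only to $\mathcal{H}_0=M_A(I_{\rho,J},\beta)$. Once this is in place, the three-way equivalence follows by citing the established regularity dichotomy for GKZ hypergeometric systems.
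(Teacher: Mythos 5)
Your proof is correct, and it is in the same spirit as the paper's, which simply cites three references: \cite[Cor.~3.16]{SW} for (ii)$\Rightarrow$(iii), and the last statement of \cite[Thm.~4.5]{DMM} together with \cite[Ch.~II, 6.2, Thm.]{Hotta} for (iii)$\Rightarrow$(i), with the translation from $M_A(I_{\rho,J},\beta)$ to the genuine GKZ module $M_{A_J}(\beta)$ left to the reader via Remark~\ref{remark-toral-hypergeometric}. What you add is the explicit mechanism for that translation at the level of the \emph{whole} Euler--Koszul complex and not just $\mathcal{H}_0$: the identification $D\otimes_R(R/I_{\rho,J})\simeq (D_J/D_J I_\rho)\otimes_\C\C[x_{J^c}]$, the observation that the twisted Euler operators act as $E_i^{A_J}-\beta_i$ on the first factor only (the $J^c$ contribution being cancelled by the degree shift), and the resulting splitting $\mathcal{H}_i(E-\beta,R/I_{\rho,J})\simeq\mathcal{H}_i^{A_J}(E-\beta,R_J/I_\rho)\boxtimes\mathcal{O}_{\C^{J^c}}$. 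Since external product with $\mathcal{O}_{\C^{J^c}}$ preserves and reflects regular holonomicity (e.g.\ the inverse image along $\C^J\hookrightarrow\C^n$ recovers the first factor), this cleanly reduces all three statements to the GKZ case for $A_J$, whereas the paper instead invokes that the proof of \cite[Thm.~4.5]{DMM} goes through when only the primes in the toral filtration are homogeneous rather than $A$ itself. Both arguments rest on the same two inputs (Hotta's regularity theorem and the Schulze--Walther converse); yours trades inspecting the internals of the DMM proof for a short bookkeeping argument about the Koszul complex, which is arguably cleaner and also clarifies the nonvanishing point that makes (ii)$\Rightarrow$(iii) substantive.
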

\begin{proof}
$i)\Rightarrow ii)$ is obvious, $ii)\Rightarrow iii)$ follows
straightforward from \cite[Corollary 3.16]{SW} and $iii)\Rightarrow
i)$ is a particular case of the last statement in \cite[Theorem
4.5]{DMM} and it also follows from \cite[Ch. II, 6.2, Thm.]{Hotta}.
\end{proof}

\begin{remark}\label{remark-regularity-natively-toral}
Recall from \cite[Theorem 4.5]{DMM} that for any toral module $V$ we
have that $-\beta \notin \qdeg V$ if and only if $\mathcal{H}_0
(E-\beta,V)=0$ if and only if $\mathcal{H}_i (E-\beta,V)=0$ for all
$i$. In particular, since the $D$--module $0$ is regular holonomic
it follows that conditions i) and ii) in Lemma
\ref{regularity-natively-toral} are also equivalent without the
condition $-\beta \in \qdeg(R/I_{\rho ,J})$.
\end{remark}

\begin{theorem}\label{regularity}
Let $I\subseteq R$ be an $A$-graded binomial ideal such that $M_A
(I,\beta)$ is holonomic (equivalently, $-\beta \notin
\mathcal{Z}_{\rm Andean}(I)$). The following conditions are
equivalent:
\begin{enumerate}
\item[i)] $\mathcal{H}_i (E-\beta, R/I)$ is regular
holonomic for all $i$.

\item[ii)] $M_A (I,\beta )$ is regular holonomic.

\item[iii)] All the associated toral primes $I_{\rho ,J}$ of $I$
such that $-\beta \in \qdeg(R /C_{\rho ,J})$ are
homogeneous.
\end{enumerate}
\end{theorem}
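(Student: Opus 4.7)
The plan is to prove the cycle of implications (iii) $\Rightarrow$ (i) $\Rightarrow$ (ii) $\Rightarrow$ (iii). The implication (i) $\Rightarrow$ (ii) is tautological, since $M_A(I,\beta)=\mathcal{H}_0(E-\beta, R/I)$. In both nontrivial directions I would first invoke Proposition \ref{isomorphism-I-beta} to replace $R/I$ by $R/I_\beta$. The holonomicity hypothesis $-\beta\notin \cZ_{\rm Andean}(I)$ ensures that no Andean primary component appears in the intersection defining $I_\beta$, so $R/I_\beta$ embeds into the direct sum $\bigoplus_{-\beta\in \qdeg(R/C_{\rho,J})} R/C_{\rho,J}$ of toral modules and is therefore itself toral. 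Fix a toral filtration $0=V_0\subset V_1\subset \cdots \subset V_\ell=R/I_\beta$ with natively toral successive quotients $V_k/V_{k-1}\simeq R/I_{\rho_k,J_k}(\gamma_k)$.

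For (iii) $\Rightarrow$ (i), observe that each $I_{\rho_k,J_k}$ lies in the support of $R/I_\beta$ and hence contains an associated prime of $R/I_\beta$, which by construction of $I_\beta$ is a toral prime $I_{\rho,J}$ with $-\beta\in \qdeg(R/C_{\rho,J})$. The inclusion $I_{\rho,J}\subseteq I_{\rho_k,J_k}$ forces $J_k\subseteq J$, because $\partial_j$ is a nonzerodivisor of $R/I_{\rho,J}$ precisely when $j\in J$. Hence $A_{J_k}$ is a column submatrix of $A_J$, and by hypothesis (iii) the row $(1,\ldots,1)$ belonging to the row span of $A_J$ also belongs to that of $A_{J_k}$, so $I_{\rho_k,J_k}$ is homogeneous. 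Lemma \ref{regularity-natively-toral} together with the shift formula of Remark \ref{Euler-Koszul-shift} then gives that every $\mathcal{H}_i(E-\beta,V_k/V_{k-1})$ is regular holonomic. A straightforward induction on $k$ using the Euler--Koszul long exact sequence and the closure of regular holonomic $D$-modules under subquotients and extensions concludes that $\mathcal{H}_i(E-\beta, R/I_\beta)\simeq \mathcal{H}_i(E-\beta, R/I)$ is regular holonomic for all $i$.

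For (ii) $\Rightarrow$ (iii), I argue by contrapositive: assume some toral associated prime $I_{\rho,J}$ of $I$ with $-\beta\in \qdeg(R/C_{\rho,J})$ has $A_J$ non-homogeneous, and I aim to exhibit an irregular quotient of $M_A(I,\beta)$. By Lemma \ref{qdeg-toral-lemma} there is a shift $\alpha_i$ with $-\beta\in\alpha_i+\C A_J$, hence $R/I_{\rho,J}(-\alpha_i)$ is a natively toral piece of any toral filtration of $R/C_{\rho,J}$ and has $-\beta$ in its quasidegree set. The containment $I_\beta\subseteq C_{\rho,J}$ yields a surjection $R/I_\beta\twoheadrightarrow R/C_{\rho,J}$, and right-exactness of the Euler--Koszul functor $\mathcal{H}_0(E-\beta,-)$ produces a surjection $M_A(I,\beta)\twoheadrightarrow M_A(C_{\rho,J},\beta)$. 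The key remaining step is to arrange a toral filtration of $R/C_{\rho,J}$ in which $R/I_{\rho,J}(-\alpha_i)$ appears as the top quotient $V_\ell/V_{\ell-1}$; this yields a further surjection $M_A(C_{\rho,J},\beta)\twoheadrightarrow \mathcal{H}_0(E-\beta, R/I_{\rho,J}(-\alpha_i))$, whose target is non-regular by Lemma \ref{regularity-natively-toral} applied through Remark \ref{Euler-Koszul-shift}. Quotient-closure of the regular holonomic category then forces $M_A(I,\beta)$ to fail to be regular holonomic.

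The main obstacle is precisely the construction, in the (ii) $\Rightarrow$ (iii) step, of a toral filtration of the primary module $R/C_{\rho,J}$ whose topmost quotient is a prescribed shifted copy $R/I_{\rho,J}(-\alpha_i)$. My strategy would be to gather all submodules supported on toral primes strictly containing $I_{\rho,J}$ (the strictly lower-dimensional components of the quasidegree set, in the sense of Lemma \ref{qdeg-toral-lemma}) into a single submodule $N$, so that $(R/C_{\rho,J})/N$ has $I_{\rho,J}$ as its only associated prime and admits a toral filtration by the various shifted copies of $R/I_{\rho,J}$; a Jordan--H\"older-type rearrangement in this pure quotient then allows the specific shift $-\alpha_i$ to be placed on top, after which the resulting filtration is lifted back to $R/C_{\rho,J}$ by inserting $N$ and its own toral filtration beneath it.
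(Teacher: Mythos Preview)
Your (iii) $\Rightarrow$ (i) agrees with the paper's argument.

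The gap is in (ii) $\Rightarrow$ (iii): the rearrangement you need is impossible in general. The module $R/C_{\rho,J}$ is cyclic with generator in $A$-degree $0$, so the top quotient $M/M_{r-1}$ of \emph{any} toral filtration is again cyclic and generated in degree $0$, hence isomorphic to some $R/I_{\rho',J'}$ with $I_{\rho',J'}\supseteq I_{\rho,J}$ and \emph{trivial} shift. Its quasidegree set therefore lies inside $\CC A_J$, and if $-\beta\in\qdeg(R/C_{\rho,J})\setminus\CC A_J$ then no choice of filtration produces a top quotient whose $\mathcal{H}_0$ is nonzero. Concretely, take $A=\left(\begin{smallmatrix}1&2&0\\0&0&1\end{smallmatrix}\right)$, $J=\{1,2\}$, $C_{\rho,J}=(\partial_1^2-\partial_2,\,\partial_3^2)$: here $I_{\rho,J}$ is inhomogeneous, $\qdeg(R/C_{\rho,J})=(\CC\times\{0\})\cup(\CC\times\{-1\})$, and for $-\beta\in\CC\times\{-1\}$ the only filtration piece whose quasidegree set contains $-\beta$ is the bottom one $(\bar\partial_3)$, which cannot be moved up. Your submodule $N$ is forced to be zero as well, since every nonzero submodule of a primary module has the same unique associated prime.

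The paper avoids rearrangement by running \emph{decreasing} induction on an arbitrary toral filtration $0=M_0\subset\cdots\subset M_r=R/C_{\rho,J}$, showing that all $\mathcal{H}_j(E-\beta,M_k/M_{k-1})$ and all $\mathcal{H}_0(E-\beta,M_{k})$ are regular holonomic. The tool you are not exploiting is the implication (ii) $\Rightarrow$ (i) of Lemma \ref{regularity-natively-toral} (together with Remark \ref{remark-regularity-natively-toral} for the vanishing case): once $\mathcal{H}_0(E-\beta,M_k/M_{k-1})$, being a quotient of the regular module $\mathcal{H}_0(E-\beta,M_k)$, is seen to be regular, the lemma upgrades this to regularity of $\mathcal{H}_1(E-\beta,M_k/M_{k-1})$, and the exact piece $\mathcal{H}_1(E-\beta,M_k/M_{k-1})\to\mathcal{H}_0(E-\beta,M_{k-1})\to\mathcal{H}_0(E-\beta,M_k)$ then transports regularity one level down. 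After the induction reaches every level, Lemma \ref{qdeg-toral-lemma} supplies an index $i$ with $M_i/M_{i-1}\simeq R/I_{\rho,J}(\gamma_i)$ and $-\beta$ in its quasidegree set; its $\mathcal{H}_0$ is then nonzero and regular, and Lemma \ref{regularity-natively-toral} forces $I_{\rho,J}$ to be homogeneous.
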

\begin{proof}
The implication $i)\Rightarrow ii)$ is obvious. Let us prove
$ii)\Rightarrow iii)$. For any toral primary component $C_{\rho , J
}$ of $I$ we have $I\subseteq C_{\rho ,J}$ and so there is a natural
epimorphism $M_A (I, \beta ) \twoheadrightarrow M_A ( C_{\rho ,J}
,\beta )$. Since  $M_A (I,\beta )$ is regular holonomic then $M_A
(C_{\rho ,J},\beta )$ is also regular holonomic. Take a toral
filtration of $M=R/C_{\rho ,J}$, $0\subseteq M_1
\subseteq \cdots \subseteq M_r =M$.  We claim that
\begin{equation}
\mathcal{H}_j (E-\beta, M_i /M_{i-1}) \mbox{ and } \mathcal{H}_0
(E-\beta,M_{i-1}) \mbox{ are regular holonomic} \label{assertion}
\end{equation} for all $i,j$.

Let us prove (\ref{assertion}) by decreasing induction on $i$. For
$i=r$, we have a surjection from the regular holonomic $D$--module
$\mathcal{H}_0 (E-\beta,M_r)=M_A (C_{\rho ,J},\beta )$ to
$\mathcal{H}_0 (E-\beta,M_r /M_{r-1})$ and so it is regular
holonomic too. By Remark \ref{Euler-Koszul-shift}, Lemma
\ref{regularity-natively-toral} and Remark
\ref{remark-regularity-natively-toral} we have that the $D$-module
$\mathcal{H}_j (E-\beta,M_r /M_{r-1})$ is regular holonomic for all
$j$. Since $$\mathcal{H}_1 (E-\beta, M_{r}/M_{r-1}) \longrightarrow
\mathcal{H}_0 (E-\beta, M_{r-1}) \longrightarrow \mathcal{H}_0
(E-\beta, M_r)$$ is exact we have that $\mathcal{H}_0 (E-\beta,
M_{r-1})$ is regular holonomic.

Assume that (\ref{assertion}) holds for some $i=k+1\leq r$ and for
all $j$. We consider the exact sequence $$0\longrightarrow M_{k-1}
\longrightarrow M_k \longrightarrow M_k / M_{k-1} \longrightarrow
0$$ and the following part of the long exact sequence of
Euler-Koszul homology

{\small \begin{equation} \cdots \mathcal{H}_1
(E-\beta, M_{k}/M_{k-1}) \rightarrow \mathcal{H}_0 (E-\beta,
M_{k-1}) \rightarrow \mathcal{H}_0 (E-\beta,  M_k)
\twoheadrightarrow \mathcal{H}_0 (E-\beta, M_k / M_{k-1}).
\label{Euler-Koszul-long}
\end{equation}}

By induction hypothesis $\mathcal{H}_0 (E-\beta,  M_k)$ is regular
holonomic. This implies that $\mathcal{H}_0 (E-\beta, M_k /
M_{k-1})$ is regular holonomic by (\ref{Euler-Koszul-long}).
Applying Remark \ref{Euler-Koszul-shift}, Lemma
\ref{regularity-natively-toral} and Remark
\ref{remark-regularity-natively-toral} we have that $\mathcal{H}_j
(E-\beta, M_k /M_{k-1})$ is regular holonomic for all $j$. Thus, by
(\ref{Euler-Koszul-long}) we have that $\mathcal{H}_0 (E-\beta,
M_{k-1})$ is regular holonomic too and we have finished the induction
proof of (\ref{assertion}).

Assume that $-\beta \in \qdeg(R /C_{\rho ,J})$. By Lemma
\ref{qdeg-toral-lemma} there exists $i$ such that $-\beta$ lies in
the quasidegrees set of $M_i /M_{i-1}\simeq R/I_{\rho
,J}(\gamma_i )$ and we also have by (\ref{assertion}) that
$$\mathcal{H}_0 (E-\beta, M_i /M_{i-1})\simeq \mathcal{H}_0 (E-\beta
+ \gamma_i, R/I_{\rho ,J})(\gamma_i )$$ is a nonzero
regular holonomic $D$-module. Thus, by Lemma
\ref{regularity-natively-toral} we have that $I_{\rho ,J}$ is
homogeneous.

Let us prove $iii)\Rightarrow i)$. By Proposition
\ref{isomorphism-I-beta} we just need to prove that $M_A (I_{\beta},
\beta )$ is regular holonomic. We have that all the associated
primes of $I_{\beta}$ are toral and homogeneous. In particular $M=R/I_{\beta}$ is a toral module and for any toral filtration
of $M$ the successive quotients $M_i /M_{i-1}$ are isomorphic to
some $\Z^d$--graded translate of a quotient $R/I_{\rho_i
,J_i}$ where $I_{\rho_i ,J_i}$ is toral and contains a minimal prime
$I_{\rho ,J}$ of $I_{\beta}$. Such minimal prime is homogeneous by
assumption and so $A_J$ is homogeneous. Since $J_i \subseteq J$ we
have that $A_{J_i}$ and $I_{\rho_i , J_i }$ are homogeneous too.
Now, we just point out that that the proof of the last statement in
\cite[Theorem 4.5]{DMM} still holds for $V=M$ if we don't require
$A$ to be homogenous but all the primes occurring in a  toral
filtration of $M$ to be homogeneous.

\end{proof}

\begin{remark}
Theorem \ref{regularity} shows in particular that the property of a
binomial $D$-module $M_A (I, \beta )$ of being regular (holonomic)
can fail to be constant when $-\beta$ runs outside the Andean
arrangement. This phenomenon is forbidden to binomial Horn systems
$M_A (I(B),\beta)$ (see \cite[Definition 1.5]{DMM}) since the
inclusion $I(B)\subseteq I_A$ induces a surjective morphism
$$\mathcal{H}_0 (E-\beta, I(B))\twoheadrightarrow M_A (\beta )$$ and
then regular holonomicity of $\mathcal{H}_0 (E-\beta,
R/I(B))$ implies regular holonomicity of $M_A (\beta )$,
which is equivalent to the standard homogeneity of $I_A$ by \cite{Hotta, SST, SW}.
\end{remark}

\begin{definition} \label{non-regular-arrangement}
The non-regular arrangement of $I$ (denoted by ${\mathcal Z}_{{\rm non-regular}}(I)$) is the union of the Andean
arrangement of $I$ and the union of quasidegrees sets of the
quotients of $R$ by primary components $C_{\rho ,J}$ of $I$ such
that $I_{\rho ,J}$ is not homogeneous with respect to the standard
grading. So, we have $${\mathcal Z}_{{\rm non-regular}}(I) =
{\mathcal Z}_{{\rm Andean}}(I) \cup \left(\bigcup_{I_{\rho,J} \,
{\rm non \, homogeneous}}  \qdeg(R/C_{\rho,J})\right). $$
\end{definition}

\begin{example}\label{counterexample1}
Consider the ideal $I=\langle \partial_1^2 \partial_2 -\partial_2^2
, \partial_2 \partial_3 , \partial_2 \partial_4 ,\partial_1^2
\partial_3 - \partial_3^2 \partial_4 , \partial_1^2 \partial_4 -\partial_3 \partial_4^2
\rangle$. It is $A$-graded for the matrix
$$A=\left(\begin{array}{cccc}
1 & 2 & 2 & 0 \\
1 & 2 & 0 & 2
\end{array}\right)$$ but $I$ is not standard $\Z$-graded. We have the prime decomposition $I=I_1 \cap I_2 \cap I_3$ where $I_1
= \langle \partial_2 ,\partial_3 ,\partial_4 \rangle$, $I_2 =\langle
\partial_1^2 -\partial_2 ,
\partial_3 ,\partial_4 \rangle$ and $I_3 = \langle
\partial_2 , \partial_1^2 -\partial_3 \partial_4\rangle$ are toral primes of $I$.
In particular $\mathcal{Z}_{\rm Andean}(I)=\emptyset$) and by the
proof of \cite[Proposition 6.6]{DMM} we have that $\mathcal{Z}_{\rm
primary}(I) = \{0\}$ (see \cite[Definition 6.5]{DMM} for the
definition of the primary arrangement $\mathcal{Z}_{\rm
primary}(I)$).

Using \cite[Theorem 6.8]{DMM} we have that $M_A (I,\beta )$ is
isomorphic to the direct sum of $M_A (I_j ,\beta )$ for $j=1,2,3$ if
$\beta \neq 0$. Moreover, $\qdeg(R /I_j )=\C \binom{1}{1}$ for
$j=1,2$ and $\qdeg(R /I_3 )=\C^2$. Thus, for generic parameters
(more precisely for $\beta \in \C^2 \setminus \C \binom{1}{1}$) we
have that $M_A (I,\beta )$ is isomorphic to $M_A (I_3 ,\beta)$ that
is a regular holonomic by  Lemma \ref{regularity-natively-toral}.

On the other hand, there is a surjective morphism from $M_A (I
,\beta)$ to $M_A (I_2 ,\beta)$ and if $\beta \in \C \binom{1}{1}$ we
have that $M_A (I_2 ,\beta)$ is an irregular $D$-module because
$s=2$ is a slope along $x_2=0$. Thus we conclude that $M_A (I ,\beta
)$ is regular holonomic if $\beta \in \C^2\setminus \C \binom{1}{1}$
and it is an irregular holonomic $D$-module when $\beta \in \C
\binom{1}{1}$. In particular, ${\mathcal Z}_{{\rm non-regular}}(I)=
\C \binom{1}{1} \subset \CC^2$. It can also be checked that the
singular locus of $M_A (I, \beta )$ is $\{x_1 x_2 x_3 x_4 (x_1^2-4
x_3 x_4)=0\}$ when $\beta \in \C \binom{1}{1}$ and $\{x_3 x_4
(x_1^2-4 x_3 x_4)=0\}$ otherwise.
\end{example}

\begin{example}\label{counterexample2}
The primary binomial ideal $I=\langle \partial_1 -\partial_2
,\partial_3^4 ,\partial_4^3 ,\partial_3^3-\partial_4^2\rangle$ is
$A$--graded with respect to the matrix $A=(1 \; 1 \; 2 \; 3)$. Note
that $I$ is not homogeneous with respect to the standard
$\Z$-grading. However, its radical ideal $\sqrt{I}=\langle
\partial_1 -\partial_2 ,\partial_3 ,\partial_4 \rangle$ is
homogeneous. Thus, by Theorem \ref{regularity} we have that $M_A
(I,\beta)$ is regular holonomic.
\end{example}

\section{$L$--Characteristic variety and slopes of binomial
$D$--modules}\label{section-slopes}

Let $L$ be the filtration on $D$ defined by a weight vector
$(u,v)\in \R^{2n}$ with $u_i + v_i = c >0$ for some constant $c>0$.

This includes in particular the intermediate filtrations $pF+qV$
between the filtration $F$ by the order of the linear differential
operators and the Kashiwara-Malgrange filtration $V$ along a
coordinate subspace. The filtrations $pF+qV$ are the ones considered
when studying the algebraic slopes of a coherent $D$--module along a
coordinate subspace \cite{Laurent-Mebkhout}.

We will consider the $L$--characteristic variety $\Ch^L(N)$ of a
finitely generated $D$--module $N$ on $\C^n$ defined as the support
of $\operatorname{gr}^L N$ in $T^{\ast} \C^n$ (see e.g.
\cite{Laurent-ens-87}, \cite[Definition 3.1]{SW}). We recall that in
fact for $L=p F + qV$ this is a global algebraic version of
Laurent's microcharacteristic variety of type $s=p/q + 1$ \cite[\S
3.2]{Laurent-ens-87} (see also \cite[Remark 3.3]{SW}).

The $L$-characteristic variety and the slopes of a hypergeometric
$D$-module $M_A(\beta)$ are controlled by the so-called
$(A,L)$--umbrella \cite{SW}. Let us recall its definition in the
special case when $v_i > 0$ for all $i$. We denote by $\Delta_A^L$
the convex hull of $\{0,a_1^L,\ldots,a_n^L\}$ where $a_j^L =
\frac{1}{v_j}a_j$. The $(A,L)$-umbrella is the set $\Phi_A^L$ of
faces of $\Delta_A^L$ which do not contain 0. The empty face is in
$\Phi_A^L$. One identifies $\tau\in\Phi_A^L$ with $\{j | a_j^L \in
\tau \}$, or with $\{a_j | a_j^L\in \tau\}$, or with the
corresponding submatrix $A_\tau$  of $A$.

By \cite[Corollary 4.17]{SW} the $L$-characteristic variety of a
hypergeometric $D$--module $M_A(\beta)$ is
\begin{equation} \label{L-characteristic-variety} \Ch^L(M_A(\beta)) = \bigcup_{\tau\in \Phi_A^L}
\overline{C_A^\tau}\end{equation}  where $\overline{C_A^\tau}$ is
the Zariski closure in $T^*\CC^n$ of the conormal space to the orbit
$O_A^\tau\subset T^{\ast}_0 \CC^n = \CC^n$ corresponding to the face
$\tau$. In particular $\Ch^L(M_A(\beta))$ is independent of $\beta$.
By definition we have the equality $O_A^\tau:=(\CC^*)^d\cdot {\bf
1}_A^\tau$ where ${\bf 1}_A^\tau\in \N^n$ is defined by $({\bf
1}_A^\tau)_j=1$ if $j\in \tau$ and $({\bf 1}_A^\tau)_j=0$ otherwise.
The action of the torus is given with respect to the matrix $A$. If
the filtration given by $L$ equals the $F$-filtration (i.e. the
order filtration) then this description of the $F$--characteristic
variety coincides with a result of \cite[Lemmas 3.1 and 3.2]{Ado94}.

\begin{proposition}\label{slopes-primary}
If $M$ is a $I_{\rho ,J}$--coprimary toral module and $-\beta \in
\qdeg(M)$ then the $L$--characteristic variety of $\mathcal{H}_0
(E-\beta, M)$ is the $L$--characteristic variety of $M_A ( I_{\rho ,
J}, 0)$. In particular, the set of slopes of $\mathcal{H}_0
(E-\beta, M)$ along a coordinate subspace in $\C^n$ coincide with
the ones of $M_A ( I_{\rho , J}, 0)$.
\end{proposition}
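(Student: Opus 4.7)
The plan is to compare both $L$--characteristic varieties by reducing to the hypergeometric case via a toral filtration of $M$. Since $M$ is $I_{\rho,J}$--coprimary and toral, I choose a toral filtration $0 = M_0 \subset M_1 \subset \cdots \subset M_r = M$ whose successive quotients are $\ZZ^d$--graded translates $M_i/M_{i-1} \simeq (R/I_{\rho_i, J_i})(\gamma_i)$ with $I_{\rho_i, J_i} \supseteq I_{\rho, J}$ (this containment holds because $\mathrm{Supp}(R/I_{\rho_i,J_i}) \subseteq \mathrm{Supp}(M) = V(I_{\rho,J})$); by Lemma \ref{qdeg-toral-lemma} exactly $\mu_{\rho,J}$ of these quotients have $I_{\rho_i,J_i} = I_{\rho,J}$, and the quasidegree set of $M$ is covered by the quasidegree sets of these large layers.

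For the inclusion $\Ch^L(\mathcal{H}_0(E-\beta, M)) \subseteq \Ch^L(M_A(I_{\rho,J},0))$, I would induct on $r$ using the short exact sequences $0 \to M_{i-1} \to M_i \to M_i/M_{i-1} \to 0$ together with the induced Euler--Koszul long exact sequence. Right-exactness at $\mathcal{H}_0$ and the set-theoretic additivity of $\Ch^L$ under the factorization $\mathcal{H}_0(E-\beta,M_i) \twoheadrightarrow \mathcal{H}_0(E-\beta,M_i/M_{i-1})$ with kernel a quotient of $\mathcal{H}_0(E-\beta,M_{i-1})$ give, by iteration,
$$\Ch^L(\mathcal{H}_0(E-\beta, M)) \subseteq \bigcup_{i=1}^r \Ch^L(\mathcal{H}_0(E-\beta, M_i/M_{i-1})).$$
By Remark \ref{Euler-Koszul-shift} and Remark \ref{remark-toral-hypergeometric}, each nonzero factor on the right is, up to $A$--graded shift and the rescaling determined by $\rho_i$, isomorphic to the hypergeometric system $M_{A_{J_i}}(\beta')$ extended to $\CC^n$, so by \eqref{L-characteristic-variety} its $L$--characteristic variety is independent of $\beta'$ and equals $\Ch^L(M_A(I_{\rho_i, J_i}, 0))$. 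Since $I_{\rho, J} \subseteq I_{\rho_i, J_i}$ induces $H_A(I_{\rho,J},0) \subseteq H_A(I_{\rho_i,J_i},0)$, there is a natural surjection $M_A(I_{\rho, J}, 0) \twoheadrightarrow M_A(I_{\rho_i, J_i}, 0)$, yielding $\Ch^L(M_A(I_{\rho_i, J_i}, 0)) \subseteq \Ch^L(M_A(I_{\rho, J}, 0))$ and hence the forward containment.

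For the reverse inclusion, I use that $-\beta \in \qdeg(M)$ together with Lemma \ref{qdeg-toral-lemma} to locate an index $i_0$ such that $M_{i_0}/M_{i_0 - 1} \simeq (R/I_{\rho,J})(\gamma_{i_0})$ and $-\beta + \gamma_{i_0} \in \qdeg(R/I_{\rho,J})$. After reshuffling the filtration so that this layer sits at the top (that is, $i_0 = r$), the surjection $\mathcal{H}_0(E-\beta, M) \twoheadrightarrow \mathcal{H}_0(E-\beta, M_r/M_{r-1})$ from the long exact sequence, combined with Remark \ref{remark-regularity-natively-toral} (which guarantees $\mathcal{H}_0(E-\beta, M_r/M_{r-1}) \neq 0$) and \eqref{L-characteristic-variety}, gives $\Ch^L(M_A(I_{\rho,J}, 0)) \subseteq \Ch^L(\mathcal{H}_0(E-\beta, M))$. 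The statement on slopes then follows immediately, since slopes along a coordinate subspace are recovered from the family of $L$--characteristic varieties as $L = pF+qV$ varies. The main obstacle I anticipate is precisely this reshuffling step: placing a specified $R/I_{\rho,J}$--layer whose quasidegree set contains $-\beta$ at the top of a toral filtration requires an $A$--graded analogue of the standard swapping lemmas for prime filtrations, or alternatively a direct construction of a cyclic quotient $M \twoheadrightarrow (R/I_{\rho,J})(\gamma)$ with the appropriate shift, which in turn rests on the explicit structure of the $\mu_{\rho,J}$ large layers in Lemma \ref{qdeg-toral-lemma}.
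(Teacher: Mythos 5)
Your forward inclusion is sound and close in spirit to what one needs (the paper arrives at the same containment by a slightly different route, surjecting from a direct sum of shifts of $R/I_{\rho,J}^m$ and using the explicit description $\Ch^L(M_A(I_{\rho,J},0))=\mathcal{V}(\operatorname{in}_L(I_\rho),A_Jx_J\xi_J,\xi_j:j\notin J)$ from \cite{SW}). The gap is exactly where you flag it, and it cannot be patched the way you propose. A toral filtration is an ascending chain of submodules, so the layer $M_{i_0}/M_{i_0-1}\simeq (R/I_{\rho,J})(\gamma_{i_0})$ is a \emph{subquotient} of $M$, and there is in general no $A$--graded surjection $M\twoheadrightarrow (R/I_{\rho,J})(\gamma_{i_0})$ with the required shift, hence no way to reorder the filtration so that this layer sits on top. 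For a concrete obstruction take $n=d=1$, $A=(1)$, $J=\emptyset$, $I_{\rho,J}=(\partial_1)$, and $M=R/(\partial_1^2)$, with $-\beta=-1$. Here $\tdeg(M)=\{0,-1\}$, the only big layer meeting degree $-1$ is $(\partial_1)/(\partial_1^2)\simeq (R/(\partial_1))(1)$, and yet any $A$--graded map $M\to (R/(\partial_1))(1)$ must kill the generator $1\in M_0$ (the target has no degree-$0$ piece) and therefore kills all of $M$. So neither a swapping lemma nor a cyclic quotient of the needed shift exists, even in this tiny example.

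The paper closes this gap with an argument that is genuinely more than set-theoretic: it inducts on the length of the toral filtration and, in the problematic case where the top quotient $M/M_r\simeq (R/I_{\rho',J'})(\gamma)$ has $I_{\rho,J}\subsetneq I_{\rho',J'}$ and $-\beta\in\qdeg(M/M_r)$, it argues by contradiction at the level of $L$--characteristic \emph{cycles} rather than varieties. If some irreducible component $C$ of $\Ch^L(M_A(I_{\rho,J},0))$ were missing from $\Ch^L(\mathcal{H}_0(E-\beta,M))$, then $\mu_{A,0}^{L,C}(M/M_r,\beta)=0$, and by (adapted) \cite[Theorems 4.11 and 4.16]{SW} in fact $\mu_{A,i}^{L,C}(M/M_r,\beta)=0$ for all $i$; additivity of the $L$--characteristic cycle along the Euler--Koszul long exact sequence then forces $\mu_{A,0}^{L,C}(M,\beta)=\mu_{A,0}^{L,C}(M_r,\beta)>0$ by the induction hypothesis, a contradiction. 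This multiplicity bookkeeping across \emph{all} Euler--Koszul homology degrees is precisely what replaces the nonexistent surjection, and it is the ingredient your outline is missing. (The paper also handles separately the easy case $-\beta\notin\qdeg(M/M_r)$, where the top quotient's Euler--Koszul homology vanishes outright and $\mathcal{H}_0(E-\beta,M_r)\simeq\mathcal{H}_0(E-\beta,M)$; in that situation your idea works, but it does not cover the general case.)
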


\begin{proof}
Since $M$ is $I_{\rho ,J}$--coprimary there exists $m\geq 0$ such
that $I_{\rho ,J}^m$ annihilates $M$. Consider a set of
$A$--homogeneous elements $m_1 ,\ldots , m_k \in M$ generating $M$
as $R$--module. This leads to a natural $A$--graded surjection $
\bigoplus_{i=1}^k R/I_{\rho ,J}^m (- \deg ( m_i ) )
\twoheadrightarrow M$. In particular, there is a surjective morphism
of $D$-modules
$$\bigoplus_{i=1}^k \mathcal{H}_0 (E-\beta, R/I_{\rho ,J}^m
(- \deg (m_i ) )) \twoheadrightarrow \mathcal{H}_0 (E-\beta, M)$$ inducing the inclusion:
$$\operatorname{Ch}^L (\mathcal{H}_0
(E-\beta, M))\subseteq  \mathcal{V}( \operatorname{in}_L (I_{\rho
,J}^m), A x \xi)=\mathcal{V}( \operatorname{in}_L (I_{\rho}), A_J
x_J \xi_J , \xi_j : j\notin J ).$$ Here $(x,\xi)$ stands for the
coordinates in the cotangent space $T^*\CC^n$, $x\xi
=(x_1\xi_1,\ldots,x_n\xi_n)$ and $\cV$ is the zero set in $T^*\CC^n$
of the corresponding ideal.

The equality $\operatorname{Ch}^L
( M_A (I_{\rho ,J}, 0 ))=\mathcal{V}( \operatorname{in}_L
(I_{\rho}), A_J x_J \xi_J , \xi_j : j\notin J )$ follows from
\cite[(3.2.2) and Corollary 4.17]{SW}. Thus,
\begin{equation}
\operatorname{Ch}^L (\mathcal{H}_0 (E-\beta, M))\subseteq
\operatorname{Ch}^L ( M_A (I_{\rho ,J}, 0 ))
\label{inclusion-charac-var-primary}
\end{equation}

Let us now prove  the equality
\begin{equation}
\operatorname{Ch}^L ( \mathcal{H}_0 (E-\beta, M))=
\operatorname{Ch}^L ( M_A (I_{\rho ,J}, 0
))\label{charac-var-primary}
\end{equation} by induction on the length $r$ of a toral filtration $0=M_0
\subsetneq M_1 \subsetneq \cdots \subsetneq M_r =M$ of $M$.

If $r=1$ we have that $M\simeq R/I_{\rho , J} (\gamma
)$ for some $\gamma \in \Z^d$ and $-\beta \in \qdeg(M)$ means that
$-\beta + \gamma \in \qdeg(R/I_{\rho ,J})=\C A_J$. Thus,
$\mathcal{H}_0 (E-\beta, M )\simeq M_A (I_{\rho ,J}, \beta -\gamma
)$ and we have (\ref{charac-var-primary}) because the
$L$--characteristic variety of $M_A (I_{\rho ,J}, \beta ' )$ is
independent of $\beta ' \in - \qdeg(R/I_{\rho ,J})$ by
the results in \cite{SW}.

Assume by induction that we have (\ref{charac-var-primary}) for all
toral $I_{\rho ,J}$--coprimary modules $M$ with a toral filtration
of length $r$ such that $-\beta \in \qdeg(M)$.

Let $M$ be a $I_{\rho ,J}$--coprimary toral module with toral
filtration of length $r+1$, i.e. $0=M_0 \subsetneq M_1 \subseteq
\cdots \subsetneq M_{r+1} =M$. From the exact sequence
$$0\longrightarrow M_{r} \longrightarrow M \longrightarrow M / M_{r}
\longrightarrow 0$$ we obtain the long exact sequence of
Euler--Koszul homology {\small $$\cdots \longrightarrow \mathcal{H}_1
(E-\beta, M/M_r) \longrightarrow \mathcal{H}_0 (E-\beta, M_r)
\longrightarrow \mathcal{H}_0 ( E-\beta, M ) \longrightarrow
\mathcal{H}_0 (E-\beta, M / M_r) \longrightarrow 0.$$} Now, we need to
distinguish two cases.

Assume first that $-\beta \notin \qdeg(M /M_{r})$. Thus,
$\mathcal{H}_j (E-\beta, M / M_r)=0$ for all $j$ by \cite[Theorem
4.5]{DMM} and we have that $\mathcal{H}_0 (E-\beta, M_r)\simeq
\mathcal{H}_0 (E-\beta, M )$ so they both have the same
$L$--characteristic variety. Notice that the fact that $-\beta \in
\qdeg M \setminus \qdeg(M /M_{r})$ along with Lemma
\ref{qdeg-toral-lemma}  guarantees that there exists some $i\leq r$
such that $M_i /M_{i-1}\simeq R/I_{\rho ,J} (\gamma_i )$. This
implies that $M_r$ is also $I_{\rho ,J}$--coprimary and we can apply
the induction hypothesis.

Assume now that $-\beta \in \qdeg(M /M_{r})$. In this case we still
have that the $L$--characteristic variety of $\mathcal{H}_0
(E-\beta, M / M_{r})$ is contained in the $L$--characteristic
variety of $\mathcal{H}_0 ( E-\beta, M )$. If $M /M_{r} \simeq R/I_{\rho ,J}(\gamma )$ we have that $\operatorname{Ch}^L
(M_A (I_{\rho ,J}, 0))\subseteq \operatorname{Ch}^L (\mathcal{H}_0 (
E-\beta, M ))$ and using (\ref{inclusion-charac-var-primary}) we get
(\ref{charac-var-primary}).

We are left with the case when $-\beta \in \qdeg(M /M_{r})$ and
$M/M_r \simeq R/ I_{\rho ' , J '} (\gamma )$ with $I_{\rho ,
J}\subsetneq I_{\rho ' ,J '}$. This implies that $M_{r}$ is also
$I_{\rho ,J}$--coprimary. Moreover, it is clear that $-\beta \in
\qdeg(M_r )$ by using Lemma \ref{qdeg-toral-lemma}. Thus, we have by
induction hypothesis that the $L$--characteristic variety of
$\mathcal{H}_0 (E-\beta, M_{r})$ is the $L$--characteristic variety
of $M_A (I_{\rho ,J},0)$.

Assume to the contrary that there exists an irreducible component
$C$ of the $L$--characteristic variety of $M_A (I_{\rho ,J},0)$ that
is not contained in the $L$--characteristic variety of
$\mathcal{H}_0 (E-\beta, M)$. This implies that $C$ is not contained
in  $\operatorname{Ch}^L (\mathcal{H}_0 (E-\beta, M/M_{r}))$, i.e.
the multiplicity $\mu_{A,0}^{L,C}(M/M_{r},\beta)$  of $C$ in the
$L$-characteristic cycle of $\mathcal{H}_0 (E-\beta, M/M_{r})$ is
zero (see \cite[Definition 4.7]{SW}). As a consequence, the
multiplicity $\mu_{A,i}^{L,C}(M/M_{r},\beta)$ of $C$ in the
$L$-characteristic cycle of $\mathcal{H}_i (E-\beta, M/M_{r})$ is
zero for all $i\geq 0$ because we can use an adapted version of
\cite[Theorems 4.11 and 4.16]{SW} since $M/M_r $ is a module of the
form $R/(I_{A_{J'}}+\mathfrak{m}_{J'})(\gamma )$ after rescaling the
variables via $\rho$. Now, using the long exact sequence of
Euler--Koszul homology and the additivity of the $L$--characteristic
cycle we conclude that
$\mu_{A,i}^{L,C}(M,\beta)=\mu_{A,i}^{L,C}(M_r,\beta )$ for all
$i\geq 0$. In particular we have that $\mu_{A,0}^{L,C}(M,\beta)>0$
and thus $C$ is contained in the $L$--characteristic variety of
$\mathcal{H}_0 (E-\beta, M)$. We conclude that the
$L$--characteristic variety of $M_A (I_{\rho ,J},0)$ is contained in
the $L$--characteristic variety of $\mathcal{H}_0 (E-\beta, M)$ and
this finishes the induction proof.
\end{proof}

The following result is well known. We include a proof for the sake
of completeness.

\begin{lemma}\label{radical-initial-intersection}
Let $I_1 ,\dots ,I_r$ be a sequence of ideals in $R$ and
$\omega \in \R^n$ a weight vector. Then
\begin{equation}
\cap_{j=1}^r \sqrt{\operatorname{in}_{\omega}(I_j)} =
\sqrt{\operatorname{in}_{\omega} (\cap_{j} I_j )}
\end{equation}
\end{lemma}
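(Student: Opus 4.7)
The plan is to prove the two inclusions separately, with the $\subseteq$ direction being entirely formal and the $\supseteq$ direction relying on the multiplicativity of initial forms in the polynomial ring $R = \C[\partial]$.

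For the inclusion $\sqrt{\operatorname{in}_\omega(\cap_j I_j)} \subseteq \cap_j \sqrt{\operatorname{in}_\omega(I_j)}$, I would argue as follows. The ideal $\operatorname{in}_\omega(\cap_j I_j)$ is generated by $\{\operatorname{in}_\omega(g) : g \in \cap_j I_j\}$. Since each such $g$ lies in $I_j$, its initial form lies in $\operatorname{in}_\omega(I_j)$. Hence $\operatorname{in}_\omega(\cap_j I_j) \subseteq \operatorname{in}_\omega(I_j)$ for every $j$, so $\operatorname{in}_\omega(\cap_j I_j) \subseteq \bigcap_j \operatorname{in}_\omega(I_j)$. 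Taking radicals and using that radicals commute with finite intersections gives the desired inclusion.

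For the reverse inclusion, I would use two classical facts about radicals in a commutative ring: $\bigcap_j \sqrt{J_j} = \sqrt{\bigcap_j J_j} = \sqrt{\prod_j J_j}$. Applied with $J_j = \operatorname{in}_\omega(I_j)$, this reduces the problem to showing $\prod_j \operatorname{in}_\omega(I_j) \subseteq \operatorname{in}_\omega(\cap_j I_j)$. The product ideal is generated by products $\operatorname{in}_\omega(g_1) \cdots \operatorname{in}_\omega(g_r)$ with $g_j \in I_j$. Because $R$ is an integral domain, initial forms are multiplicative: $\operatorname{in}_\omega(g_1 \cdots g_r) = \operatorname{in}_\omega(g_1) \cdots \operatorname{in}_\omega(g_r)$, and the element $g_1 \cdots g_r$ lies in $\prod_j I_j \subseteq \bigcap_j I_j$. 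Therefore each generator of $\prod_j \operatorname{in}_\omega(I_j)$ sits in $\operatorname{in}_\omega(\bigcap_j I_j)$, which gives the inclusion at the ideal level. Taking radicals concludes.

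The only subtle point is the multiplicativity of initial forms, which is where the hypothesis that we work in a polynomial ring (a domain) is used: for $a,b\in R\minus\{0\}$, the leading $\omega$-forms satisfy $\operatorname{in}_\omega(a)\operatorname{in}_\omega(b)\neq 0$ and therefore $\operatorname{in}_\omega(ab)=\operatorname{in}_\omega(a)\operatorname{in}_\omega(b)$. Once that is in hand, the rest is a straightforward chain of inclusions of ideals. No induction on $r$ is needed, since the multiplicativity and the identity $\sqrt{\bigcap_j J_j}=\sqrt{\prod_j J_j}$ handle arbitrary finite collections simultaneously.
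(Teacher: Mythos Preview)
Your proof is correct and follows essentially the same approach as the paper's. Both arguments handle the inclusion $\supseteq$ by the obvious containment $\operatorname{in}_\omega(\cap_j I_j)\subseteq \cap_j \operatorname{in}_\omega(I_j)$, and both handle $\subseteq$ via multiplicativity of initial forms in the domain $R$: the paper picks an $\omega$--homogeneous $f\in \cap_j \operatorname{in}_\omega(I_j)$, writes $f=\operatorname{in}_\omega(g_j)$ with $g_j\in I_j$, and observes $f^r=\operatorname{in}_\omega(\prod_j g_j)\in \operatorname{in}_\omega(\cap_j I_j)$, whereas you phrase the same computation as the ideal inclusion $\prod_j \operatorname{in}_\omega(I_j)\subseteq \operatorname{in}_\omega(\cap_j I_j)$ and then invoke $\sqrt{\cap J_j}=\sqrt{\prod J_j}$. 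The difference is purely cosmetic.
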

\begin{proof}
The inclusion $\operatorname{in}_{\omega}(\cap_{j} I_j )\subseteq
\cap_{j=1}^r \operatorname{in}_{\omega}(I_j)$ is obvious and then we
can take radicals.

Let us see that $\cap_{j=1}^r
\operatorname{in}_{\omega}(I_j)\subseteq
\sqrt{\operatorname{in}_{\omega}(\cap_{j} I_j )}$.  Let us consider
an $\omega$--homogeneous element $f$ in $\cap_{j=1}^r
\operatorname{in}_{\omega}(I_j)$;  then for all $j=1,\ldots ,r$
there exists $g_j \in I_j$ such that $f=\operatorname{in}_{\omega}
(g_j)$. Thus we have $\prod_j g_j \in \cap_{j} I_j$ and $f^r =
\prod_j
\operatorname{in}_{\omega}(g_j)=\operatorname{in}_{\omega}(\prod_j
g_j) \in \operatorname{in}_{\omega} (\cap_{j} I_j )$. In particular,
$f\in \sqrt{\operatorname{in}_{\omega} (\cap_{j} I_j )}$. This
finishes the proof as the involved ideals are $\omega$--homogeneous.
\end{proof}

The following result is a direct consequence of \cite[Theorem
6.8]{DMM} and Proposition \ref{slopes-primary} when $- \beta \notin
\mathcal{Z}_{\rm primary}(I)$. However, we want to prove it when $-\beta
\notin \mathcal{Z}_{\rm Andean}(I)$ that is a weaker condition.

\begin{theorem}\label{slopes}
Let $I$ be a $A$--graded binomial ideal and consider a binomial
primary decomposition $I=\cap_{\rho , J} C_{\rho ,J}$. If $M_A (I ,
\beta )$ is holonomic (equivalently, $-\beta$ lies outside the
Andean arrangement) then the $L$-characteristic variety of $M_A ( I
, \beta )$ coincide with the union of the $L$-characteristic
varieties of $M_A ( I_{\rho , J}, 0 )$ for all associated toral
primes $I_{\rho ,J}$ of $I$ such that $-\beta \in \qdeg(R/C_{\rho
,J}) $. In particular, the slopes of $M_A (I,\beta )$ along a
coordinate subspace in $\C^n$ coincide with the union of the set of
slopes of $M_A ( I_{\rho , J},0 )$ along the same coordinate
subspace for $I_{\rho ,J}$ varying between all the associated toral
primes of $I$ such that $-\beta \in \qdeg(R/C_{\rho ,J}) $.
\end{theorem}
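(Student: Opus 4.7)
The plan is to reduce to toral primary components via Proposition \ref{isomorphism-I-beta} and then establish both inclusions componentwise, using Proposition \ref{slopes-primary} together with a toral filtration of $R/I_\beta$. Since $-\beta \notin \cZ_{\rm Andean}(I)$, all primary components appearing in $I_\beta$ are toral, and Proposition \ref{isomorphism-I-beta} yields an isomorphism $M_A(I,\beta) \simeq M_A(I_\beta,\beta)$; hence $\Ch^L(M_A(I,\beta)) = \Ch^L(M_A(I_\beta,\beta))$. Let $\mathcal{S}$ denote the set of associated toral primes of $I$ with $-\beta \in \qdeg(R/C_{\rho,J})$; these are precisely the indices that appear in the primary decomposition $I_\beta = \bigcap_{(\rho,J)\in\mathcal{S}} C_{\rho,J}$. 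It therefore suffices to prove
\[
\Ch^L(M_A(I_\beta,\beta)) \; = \; \bigcup_{(\rho,J)\in\mathcal{S}} \Ch^L(M_A(I_{\rho,J},0)).
\]
The statement about slopes then follows by letting $L$ run over the intermediate filtrations $pF+qV_Y$ for a fixed coordinate subspace $Y$ and reading off the rationals $p/q+1$ at which $\Ch^L$ strictly enlarges.

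For the inclusion $\supseteq$, fix $(\rho,J)\in\mathcal{S}$. The inclusion $I_\beta \subseteq C_{\rho,J}$ induces an $A$-graded surjection $R/I_\beta \onto R/C_{\rho,J}$. Right-exactness of $\mathcal{H}_0(E-\beta,-)$ (see \cite{MMW}) produces a surjection of $D$-modules $M_A(I_\beta,\beta) \onto \mathcal{H}_0(E-\beta, R/C_{\rho,J})$, whence
\[
\Ch^L(\mathcal{H}_0(E-\beta, R/C_{\rho,J})) \subseteq \Ch^L(M_A(I_\beta,\beta)).
\]
Since $C_{\rho,J}$ is $I_{\rho,J}$-primary and toral and $-\beta\in\qdeg(R/C_{\rho,J})$, Proposition \ref{slopes-primary} identifies the left-hand side with $\Ch^L(M_A(I_{\rho,J},0))$, and taking the union over $(\rho,J)\in\mathcal{S}$ yields this inclusion.

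For the reverse inclusion, pick a toral filtration $0 = M_0 \subsetneq M_1 \subsetneq \cdots \subsetneq M_\ell = R/I_\beta$ with $M_k/M_{k-1} \simeq (R/I_{\rho_k,J_k})(\gamma_k)$ and each $I_{\rho_k,J_k}$ toral. Induction on $k$, via the long exact sequences of Euler--Koszul homology associated with $0\to M_{k-1}\to M_k \to M_k/M_{k-1}\to 0$ and the subadditivity of $\Ch^L$ along short exact sequences of coherent $D$-modules, yields
\[
\Ch^L(M_A(I_\beta,\beta)) \; \subseteq \; \bigcup_{k=1}^\ell \Ch^L\bigl(\mathcal{H}_0(E-\beta, M_k/M_{k-1})\bigr).
\]
When $-\beta \notin \qdeg(M_k/M_{k-1})$ the $k$-th term vanishes by \cite[Theorem 4.5]{DMM}; otherwise Proposition \ref{slopes-primary} identifies it with $\Ch^L(M_A(I_{\rho_k,J_k},0))$. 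Since $I_\beta$ annihilates $M_k/M_{k-1}$, the prime $I_{\rho_k,J_k}$ contains $I_\beta$ and so contains some minimal prime $I_{\rho,J}$ of $I_\beta$, with $(\rho,J)\in\mathcal{S}$; the $A$-graded surjection $R/I_{\rho,J} \onto R/I_{\rho_k,J_k}$ then gives, again by right-exactness, $\Ch^L(M_A(I_{\rho_k,J_k},0)) \subseteq \Ch^L(M_A(I_{\rho,J},0))$. Chaining these inclusions completes the argument. The main obstacle is precisely this last step: the primes $I_{\rho_k,J_k}$ arising in a toral filtration of $R/I_\beta$ may strictly contain the associated primes of $I_\beta$, and their $L$-characteristic contributions must be absorbed by those of actual associated primes in $\mathcal{S}$ through the functoriality of $\mathcal{H}_0(E,-)$.
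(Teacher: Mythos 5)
Your proof is correct, and it takes a genuinely different route for the upper bound on the $L$-characteristic variety than the paper does. You establish the inclusion $\supseteq$ exactly as the paper does (surjections $M_A(I_\beta,\beta) \onto \mathcal{H}_0(E-\beta,R/C_{\rho,J})$ plus Proposition \ref{slopes-primary}). For the reverse inclusion, the paper stays on the commutative-algebra side: it uses the generic upper bound $\Ch^L(M_A(I_\beta,\beta)) \subseteq \mathcal{V}(\operatorname{in}_L(I_\beta), Ax\xi)$ together with Lemma \ref{radical-initial-intersection}, which identifies $\sqrt{\operatorname{in}_L(I_\beta)}$ with $\bigcap_{(\rho,J)\in\mathcal{S}}\sqrt{\operatorname{in}_L(I_{\rho,J})}$; then the Schulze--Walther description $\Ch^L(M_A(I_{\rho,J},0)) = \mathcal{V}(\operatorname{in}_L(I_{\rho,J}),Ax\xi)$ sandwiches the two bounds. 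You instead take a toral filtration of $R/I_\beta$, propagate $\Ch^L$ through the Euler--Koszul long exact sequences (using additivity of $\Ch^L$ and right-exactness), apply Proposition \ref{slopes-primary} to each graded piece, and absorb contributions from embedded primes $I_{\rho_k,J_k}$ into those of the associated primes of $I_\beta$ via the surjections $R/I_{\rho,J} \onto R/I_{\rho_k,J_k}$. Your argument is a bit longer but stays entirely inside the Euler--Koszul/toral-filtration formalism (mirroring the proof of Proposition \ref{slopes-primary} itself), whereas the paper offloads the hard part of the upper bound onto a short purely commutative lemma on initial ideals of intersections; either choice is sound, and the tradeoff is essentially between reusing homological machinery and introducing one extra algebraic lemma.
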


\begin{proof}
By Proposition \ref{isomorphism-I-beta}, we have that $M_A (I,\beta
)$ is isomorphic to $M_A (I_{\beta} ,\beta )$. We also have that
\begin{equation}
\bigcup_{-\beta \in \qdeg (R/C_{\rho ,J})} \Ch^L (M_A
(C_{\rho ,J} ,\beta )) \subseteq \Ch^L (M_A (I_{\beta},
\beta))\subseteq \mathcal{V}(\operatorname{in}_{L}(I_{\beta}), A
x\xi ) \label{inclusion-charL-Ibeta}
\end{equation}

On the other hand, by Lemma \ref{radical-initial-intersection} we
have that $\mathcal{V}(\operatorname{in}_{L}(I_{\beta}))=\cup
\mathcal{V}(\operatorname{in}_{L}(C_{\rho , J})) =
\mathcal{V}(\operatorname{in}_{L}(I_{\rho , J}))$. Here $\mathcal V$
is the zero set of the corresponding ideal. The result in the
statement follows from the last inclusion, the inclusions
(\ref{inclusion-charL-Ibeta}) and Proposition \ref{slopes-primary}.
\end{proof}

\begin{remark}
Notice that Theorem \ref{slopes} implies that the map from $\C^d
\setminus \mathcal{Z}_{\rm Andean}(I)$ to \emph{Sets} sending
$\beta$ to the set of slopes of $M_A (I,\beta )$ along any fixed
coordinate subspace is upper-semi-continuous in $\beta$.  The
Examples \ref{big-example-1} and \ref{big-example-2} illustrate
Theorem \ref{slopes}. \cite[Theorem 4.1]{ES} has been very useful in
order to construct binomial $A$--graded ideals $I$ starting from
some toral primes that we wanted to be associated primes of $I$.
\end{remark}

\begin{example}\label{big-example-1}
The binomial ideal
$$I= \langle \partial_1\partial_3,\partial_1\partial_4 , \partial_2 \partial_3, \partial_2 \partial_4 ,
\partial_3 \partial_4 ,\partial_1^4 \partial_2^3 - \partial_1 \partial_5 , \partial_1^3 \partial_2^4 -\partial_2 \partial_5,
\partial_3^4-\partial_3 \partial_5 , \partial_4^4- \partial_4 \partial_5^2 \rangle$$ is $A$--graded for the
matrix $$A=\left(\begin{array}{ccccc}
               1 & 0 & 1 & 2 & 3 \\
               0 & 1 & 1 & 2 & 3
             \end{array}\right)$$ Its primary components are the toral
             primes $I_i := I_{\rho_i ,J_i}$, $i=1,2,3,4$, where
             $J_1=\{3,5\}, J_2=\{4,5\}, J_3 =\{ 1,2,5\},J_4=\{5\}$
             and $\rho_i :\ker_{\Z} A_{J_i }\longrightarrow
             \C^{\ast}$ is the trivial character for $i=1,2,3,4$.
             Notice that $\qdeg (R/I_i )=\C A_{J_i}=\C \binom{1}{1}$
             for $i=1,2,4$ and $\qdeg R/I_3 = \C^2$. Using
             Theorem \ref{slopes}, Remark \ref{remark-toral-hypergeometric} and the results in \cite{SW} we have the following:

             If $\beta \in \C^2 \setminus \C \binom{1}{1}$ then $M_A
             (I ,\beta )\simeq M_A (I_3 ,\beta )$ has a unique slope
             $s= 6$ along the hyperplane $\{ x_5 = 0\}$ and it is
             regular along the other coordinate hyperplanes.

If $\beta \in \C \binom{1}{1}$ then $M_A (I ,\beta )$ has the slopes
$s_1= 3/2$, $s_2 =3$ and $s_3 =6$ along $\{x_5 =0\}$.
\end{example}

\begin{example}\label{big-example-2}
The binomial ideal $I=\langle \partial_4 \partial_5 , \partial_3
\partial_5 , \partial_2 \partial_5 , \partial_1 \partial_5, \partial_3 \partial_4 \partial_6 ,\partial_2 \partial_4 \partial_6 ,
\partial_1 \partial_3 \partial_4 ,\partial_1 \partial_2 \partial_4 ,
\partial_5^3 -\partial_6 \partial_5 , \partial_2 \partial_3^2 \partial_4-\partial_3 \partial_4^2 ,
\partial_1^2 \partial_4^3-\partial_4 \partial_6 ,\partial_1^2 \partial_2^2 \partial_3^3 -\partial_3 \partial_6, \partial_1^2 \partial_2^3 \partial_3^2 -\partial_2
\partial_6
\rangle$ is $A$--graded for the matrix
$$A=\left(\begin{array}{cccccc}
                                              1 & 0 & 0 & 0 & 1 & 2 \\
                                              0 & 1 & 0 & 1 & 1 & 2 \\
                                              0 & 0 & 1 & 1 & 1 & 2
                                            \end{array}\right)
$$ Using Macaulay2 we get a primary decomposition of $I$ where the primary components are the toral primes $I_i =I_{\rho_i ,
J_i}$, $i=1,\ldots,6$, where $J_1 =\{  1,2,3,6\}$, $J_2=\{2,3,4\}$,
$J_3 =\{2,4 \}$, $J_4 =\{5,6\}$, $J_5 =\{1,4,6\}$, $J_6=\{1,6\}$ and
$\rho_i :\ker_{\Z} A_{J_i }\longrightarrow \C^{\ast}$ is the trivial
character for $i=1,\ldots,6$.

We have $I_1 =\langle
\partial_1^2 \partial_2^2 \partial_3^2 - \partial_6 , \partial_4
,\partial_5)$, $I_2 =\langle \partial_2 \partial_3 -\partial_4
,\partial_1 ,\partial_5 ,\partial_6 \rangle$, $I_3 = \langle
\partial_1 ,\partial_3,\partial_5 ,\partial_6 \rangle$, $I_4
=\langle \partial_5^2 -\partial_6 ,\partial_1 , \partial_2
,\partial_3 , \partial_4 \rangle$, $I_5 =\langle
\partial_1^2 \partial_4^2 - \partial_6 , \partial_2 ,\partial_3 ,\partial_5
\rangle$ and $I_6 =  \langle \partial_2, \partial_3 ,\partial_4
,\partial_5 \rangle$.

We know that $\qdeg R/I_i =\C A_{J_i}$ for $i=1,\ldots,6$. In
particular, $R/I_1$ is the unique component with Krull dimension
$d=3$.

There are four components with Krull dimension $d-1=2$,
namely $R/I_2 ,R/I_3$ have quasidegrees set $\C A_{J_2} =\C
A_{J_3}=\{y_1 =0 \}\subseteq \C^3$ and $R/I_5 ,R/I_6$ have
quasidegrees set $\C A_{J_5}=\C A_{J_6}=\{ y_2 = y_3  \}\subseteq
\C^3$. There is one component $R/I_4$ with Krull dimension one and
quasidegrees set equal to the line $\C A_{J_4} =
\{y_1=y_2=y_3\}\subseteq \C^3$.

Thus, in order to study the behavior
of $M_A (I,\beta)$ when varying $\beta \in \C^3$ it will be useful
to stratify the space of parameters $\C^3$ by the strata $\Lambda_1
=\C^3 \setminus \{ y_1 (y_2 -y_3 )=0 \}$, $\Lambda_2 =\{y_1 =0
\}\setminus \{y_2 -y_3 =0 \}$, $\Lambda_3 = \{y_2 -y_3 =0
\}\setminus \{y_1 =0 \}$, $\Lambda_4 =\{y_1 = y_2=y_3 \}\setminus
\{0\}$, $\Lambda_5 =\overline{\Lambda_2}\cap
\overline{\Lambda_3}\setminus \{0\}= \{ y_1 =0=y_2 -y_3\}\setminus
\{0\}$ and $\Lambda_6 =\{ 0\}$.

Let us compute the slopes of $M_A (I,\beta )$ along coordinate
hyperplanes according with Theorem \ref{slopes}, Remark
\ref{remark-toral-hypergeometric} and the results in \cite{SW}.
Recall that $a_1\ldots,a_6$ stand for the columns of the matrix $A$.
We have the following situations:

\begin{enumerate}
\item[1)] If $-\beta \in \Lambda_1$ then $R/I_1$ is the unique component whose quasidegrees set contains $-\beta$.
Thus, $M_A (I,\beta )\simeq M_A (I_1 ,\beta)$ has a unique slope
$s=6$ along the hyperplane $\{x_6 =0\}$ because $a_6 /s=[1/3 , 1/3 ,
1/3]^t$ belongs to the plane passing through $a_1 ,a_2 ,a_3$.

\item[2)] If $-\beta \in \Lambda_2$, then $-\beta \in \qdeg (R/I_i )$
if and only if $i\in \{1,2,3\}$ so $M_A (I,\beta )$ has the slope
$s=6$ along $\{ x_6 =0 \}$ arising from $I_1$ and the slope $s=2$
along $\{x_4 = 0\}$ arising from $I_2$ (since $a_4 /2$ lie in the
line passing through $a_2 ,a_3$).

\item[3)] If $-\beta \in \Lambda_3$, then $-\beta \in \qdeg R/I_i$
if and only if $i\in \{1,5,6\}$. $M_A (I,\beta )$ has the slopes
$s=4$ (arising from $I_5$) and $s=6$ (arising from $I_1$) along
$\{x_6 =0\}$.

\item[4)] If $-\beta \in \Lambda_4$, then $-\beta \in \qdeg R/I_i$
if and only if $i\in \{1,4,5,6\}$. $M_A (I,\beta )$ has the slopes
$s=2$ (arising from $I_4$), $s=4$ (arising from $I_5$) and $s=6$
(arising from $I_1$) along $\{x_6 =0\}$.

\item[5)] If $-\beta \in \Lambda_5= \cap_{i \neq 4} \qdeg R/I_i \setminus \qdeg
R/I_4$ then $M_A (I,\beta )$ has the slopes $s=4$ (arising from
$I_5$) and $s=6$ (arising from $I_1$) along $\{x_6 =0\}$ and the
slope $s=2$ (arising from $I_2$) along $\{x_4 =0 \}$.

\item[6)] If $-\beta \in \Lambda_6$ (i.e. $\beta =0$) we have that
$-\beta$ is in the quasidegrees set of all the components $R/I_i$.
Thus, $M_A (I,\beta)$ has the slopes $s=2$ (arising from $I_4$),
$s=4$ (arising from $I_5$) and $s=6$ (arising from $I_1$) along
$\{x_6 =0\}$ and the slope $s=2$ (arising from $I_2$) along $\{x_4
=0 \}$.

\end{enumerate}

In all the cases there are no more slopes along coordinate
hyperplanes. Notice that when we move $-\beta$ from one stratum
$\Lambda_i$ of dimension $r$, $1\leq r \leq d=3$, to another stratum
$\Lambda_j \subseteq \overline{\Lambda_i}$ of dimension $r-1$ then
$M_A (I,\beta)$ can have new slopes along a hyperplane but no slope
disappears.

\end{example}

\begin{remark}
By \cite[Lemma 7.2]{DMM}, all toral primes of a lattice-basis ideal
$I(B)$ have dimension exactly $d$ and are minimal primes of $I (B)$.
Thus, the $L$-characteristic varieties and the set of slopes of $M_A
(I(B),\beta)$ are independent of $-\beta \notin {\mathcal Z}_{{\rm
Andean}}(I(B))$.
\end{remark}

To finish this Section we are going to compute the multiplicities of
the $L$--characteristic cycle of a holonomic binomial $D$--module
$M_A(I,\beta)$ for $\beta$ generic. Recall that the volume
$\operatorname{vol}_{\Lambda} (B)$ of a matrix $B$ with columns $b_1
,\ldots ,b_k \in \Z^d$ with respect to a lattice $\Lambda\supseteq
\Z B$ is nothing but the Euclidean volume of the convex hull of $\{
0 \}\cup \{b_1 ,\ldots , b_k\}$ normalized so that the unit simplex
in the lattice $\Lambda$ has volume one.

From now on we assume that $\mathcal{Z}_{\rm Andean}(I)\neq \C^d$
and that $\beta \in \C^d$ is generic. In particular we assume that
all the quotients $R/C_{\rho ,J}$ whose quasidegrees set contain
$-\beta$ are toral and have Krull dimension $d$. The generic
condition will also guarantee that $\beta$ is not a rank--jumping
parameter of any hypergeometric system $\mathcal{H}_0 (E-\beta ,
I_{\rho ,J})$.

Under this assumptions it is proved in \cite[Theorem 6.10]{DMM} that
the holonomic rank of $M_A (I,\beta )$ equals
$$\operatorname{rank}(M_A (I,\beta ))=\sum_{R/ I_{\rho ,J} \mbox{
toral $d$--dimensional }} \mu_{\rho ,J} \operatorname{vol}_{\Z A_J}
(A_J )$$

We will use the same strategy in order to compute the multiplicities
in the $L$-characteristic cycle $\CCh^L(M_A (I,\beta))$. It is
enough to compute the multiplicities in the $L$-characteristic cycle
of $M_A (C_{\rho ,J} ,\beta)$ for each $d$--dimensional toral
component $C_{\rho ,J}$ of $I$ and then apply \cite[Theorem
6.8]{DMM}.

In \cite[Section 3.3]{SW} the authors give an index formula for the
multiplicity $\mu_{A,0}^{L,\tau}(\beta)$ of the component
$\overline{C_A^\tau}$ in the $L$--characteristic cycle
$\CCh^L(M_A(\beta))$ of a hypergeometric $D$--module; see equality
(\ref{L-characteristic-variety}). They prove that these
multiplicities are independent of $\beta$ if $\beta$ is generic (see
\cite[Theorem 4.28]{SW}). Let us denote by $\mu_A^{L,\tau}$ this
constant value.

If $M$ is a finitely generated $R$--module, we denote by
$\mu_{A,0}^{L,\tau}(M,\beta)$ the multiplicity of the component
$\overline{C_A^\tau}$ in the $L$--characteristic cycle
$\CCh^L({\cH}_0(E-\beta,M))$ (see \cite[Definition 4.7]{SW}).

For $J\subset \{1,\ldots,n\}$ we denote $A_J$ the submatrix whose
columns are indexed by $J$, $D_J$ the Weyl algebra with variables
$\{x_j,\partial_j\, \vert \, j\in J$  and $L_J$ the filtration on
$D_J$ induced by the weights $(u_j,v_j)$ for $j\in J$. In
particular, we can define the multiplicity $\mu_{A_J}^{L_J,\tau}$
for any face $\tau$ of the $(A_J,L_J)$--umbrella $\Phi_{A_J}^{L_J}$.

\begin{theorem}\label{mult-L-char-cycles}
Let $R/C_{\rho ,J}$ be a toral $d$--dimensional module and let
$\beta$ be generic. We have for all $\tau \in \Phi_{A_J}^{L_J}$ and
for any filtration $L$ on $D$ that
$$\mu_{A,0}^{L,\tau}(R/C_{\rho ,J} ,\beta )= \mu_{\rho ,J}
\mu_{A_J }^{L_J , \tau }.$$
\end{theorem}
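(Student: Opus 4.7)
The strategy is to reduce to the hypergeometric case via a toral filtration. Fix a toral filtration
\[
0 = M_0 \subsetneq M_1 \subsetneq \cdots \subsetneq M_r = R/C_{\rho,J}
\]
whose successive quotients are natively toral, say $M_k/M_{k-1} \simeq R/I_{\rho_k, J_k}(\gamma_k)$. By the proof of Lemma \ref{qdeg-toral-lemma}, exactly $\mu_{\rho, J}$ of the indices $k$ satisfy $I_{\rho_k, J_k} = I_{\rho, J}$; call these the type (I) indices. For the remaining, type (II), indices, $I_{\rho_k, J_k}$ properly contains $I_{\rho, J}$, so $\rank A_{J_k} < \rank A_J = d$ and $\qdeg(R/I_{\rho_k, J_k}) = \C A_{J_k}$ is a proper affine subspace of $\C^d$.

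For $\beta$ sufficiently generic I will enforce two conditions: (a) $-\beta + \gamma_k \notin \C A_{J_k}$ for every type (II) index $k$; and (b) $\beta - \gamma_k$ is a non-rank-jumping parameter of the hypergeometric system $M_{A_J}(\beta - \gamma_k)$ for every type (I) index $k$. Both loci to be avoided are finite unions of proper affine subvarieties of $\C^d$, so this is an open dense condition. Under (a), \cite[Theorem 4.5]{DMM} together with Remark \ref{Euler-Koszul-shift} yields $\mathcal{H}_i(E-\beta, M_k/M_{k-1}) = 0$ for all $i$ at every type (II) index. Under (b), Remark \ref{remark-toral-hypergeometric} combined with the standard vanishing of higher Euler--Koszul homology of a hypergeometric system at a non-rank-jumping parameter (see \cite{MMW}) gives $\mathcal{H}_i(E-\beta, M_k/M_{k-1}) = 0$ for every $i > 0$ at every type (I) index.

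These vanishings cause the long exact sequences of Euler--Koszul homology attached to $0 \to M_{k-1} \to M_k \to M_k/M_{k-1} \to 0$ to degenerate into short exact sequences
\[
0 \to \mathcal{H}_0(E-\beta, M_{k-1}) \to \mathcal{H}_0(E-\beta, M_k) \to \mathcal{H}_0(E-\beta, M_k/M_{k-1}) \to 0.
\]
Additivity of the $L$-characteristic cycle on short exact sequences of holonomic $D$-modules, applied inductively on $k$, then gives
\[
\CCh^L\bigl(\mathcal{H}_0(E-\beta, R/C_{\rho,J})\bigr) = \sum_{k=1}^{r} \CCh^L\bigl(\mathcal{H}_0(E-\beta, M_k/M_{k-1})\bigr),
\]
in which the type (II) summands vanish, leaving only the $\mu_{\rho, J}$ type (I) contributions.

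For each type (I) index $k$, arguing exactly as in the initial part of the proof of Proposition \ref{slopes-primary}, we identify $\mathcal{H}_0(E-\beta, R/I_{\rho,J}(\gamma_k)) \simeq M_A(I_{\rho,J}, \beta - \gamma_k)$, and by Remark \ref{remark-toral-hypergeometric} this $D$-module carries the same $L$-characteristic cycle data in $T^*\C^n$ as the hypergeometric system $M_{A_J}(\beta - \gamma_k)$ does in $T^*\C^{|J|}$, in line with the formula $\Ch^L(M_A(I_{\rho,J}, 0)) = \mathcal{V}(\operatorname{in}_L(I_\rho), A_J x_J \xi_J, \xi_j : j \notin J)$ recalled in the proof of Proposition \ref{slopes-primary}. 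Since $\beta - \gamma_k$ is generic, \cite[Theorem 4.28]{SW} then gives multiplicity $\mu_{A_J}^{L_J, \tau}$ for the component $\overline{C_A^\tau}$, and summing the $\mu_{\rho, J}$ equal contributions yields the claimed identity $\mu_{A,0}^{L,\tau}(R/C_{\rho,J}, \beta) = \mu_{\rho,J}\, \mu_{A_J}^{L_J,\tau}$. I expect the most delicate step to be the vanishing of higher Euler--Koszul homologies for the type (I) quotients at generic $\beta$: it requires a careful combination of the rank-jump criterion for hypergeometric systems with the shifted parameters $\beta - \gamma_k$ that appear in the filtration, rather than $\beta$ itself.
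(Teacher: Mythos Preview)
Your proof is correct and follows essentially the same route as the paper: take a toral filtration of $R/C_{\rho,J}$, use genericity of $\beta$ to kill the lower-dimensional (type II) quotients and all higher Euler--Koszul homology, apply additivity of the $L$-characteristic cycle on the resulting short exact sequences, and reduce to the hypergeometric multiplicity via \cite[Theorem 4.28]{SW}. You are in fact a bit more explicit than the paper about the precise genericity conditions (a) and (b) and about the citation for the vanishing of higher Euler--Koszul homology at non-rank-jumping parameters.
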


\begin{proof} It follows the ideas of the last part of the proof of \cite[Theorem 6.10]{DMM}
(see also the proof of Theorem \ref{regularity}). We write
$M=R/C_{\rho ,J}$ and consider a toral filtration $M_0=(0) \subseteq
M_1 \subseteq \cdots \subseteq M_r=M$ each successive quotient
$M_i/M_{i-1}$ being isomorphic to $\frac{R}{I_{\rho_i,
J_i}}(\gamma_i)$ for some $\gamma_i\in \ZZ^d$. The number of
successive quotients of dimension $d$ is the multiplicity
$\mu_{\rho,J}$ of the ideal $I_{\rho,J}$ in $C_{\rho,J}$. From the
assumption on $\beta$ we can take $-\beta$ outside  the union of the
quasidegree sets of $\frac{R}{I_{\rho_i, J_i}}$ with Krull dimension
$< d$. Then
$$\cH_j(E-\beta,M_i/M_{i-1}) = \left\{\begin{array}{lcl} 0 &
{\mbox{ if }} & I_{\rho_i,J_i} \neq
I_{\rho,J}\\
\cH_j(E-\beta+\gamma_i,R/I_{\rho,J})(\gamma_i) & {\mbox{ otherwise.
}} & \end{array} \right.$$ Again using that $\beta$ is generic, we
have that $\cH_j(E-\beta , M_i /M_{i-1})=0$ for any $i$ and any
$j\geq 1$. The statement of the Theorem follows by applying
decreasing induction on $i$ and the additivity of
$\mu_{A,0}^{L,\tau}$ with respect to the exact sequence
$$0\longrightarrow \cH_0 (E-\beta, M_{i-1} ) \longrightarrow \cH_0 (E-\beta, M_i ) \longrightarrow \cH_0 (E-\beta, M_i /
M_{i-1}) \longrightarrow 0. $$ We notice here that the multiplicity
$\mu_A^{L,\tau}$ for $\cH_0(E-\beta+\gamma_i, R/I_{\rho,J})$ equals
$\mu_{A_J}^{L_J,\tau}$ for the hypergeometric $D_J$--module
$M_{A_J}(\beta-\gamma_i)$ because $\beta$ is generic.
\end{proof}

\section{On the Gevrey solutions and the irregularity of binomial $D$--modules}\label{Section-Gevrey}

Let us denote by $Y_i$ the hyperplane $x_i=0$ in $\CC^n$. Again by
\cite[Theorem 6.8]{DMM}, in order to study the Gevrey solutions and
the irregularity of a holonomic binomial $D$--module $M_A (I,\beta)$
for generic parameters $\beta \in \C^d$ it is enough to study each
binomial $D$--module $M_A (C_{\rho ,J} ,\beta)$ arising from a
$d$--dimensional toral primary component $R/C_{\rho ,J}$. For any
real number $s$ with $s\geq 1$, we consider, the irregularity
complex of order $s$, $\operatorname{Irr}_{Y_i}^{(s)} ({M}_A
(C_{\rho ,J} ,\beta))$ (see \cite[Definition 6.3.1]{Meb90}). Since
$M_A(C_{\rho,J},\beta)$ is holonomic, by a result of Z. Mebkhout
\cite[Theorem 6.3.3]{Meb90} this complex is a perverse sheaf and
then for $p\in Y_i$ generic it is concentrated in degree 0.

For $r\in \RR$ with $r\geq 1$
we denote by $L_r$ the filtration on $D$ induced by $L_r=F+(r-1)V$ and we will write simply $\Phi_A^r$ instead of $\Phi_A^{L_r}$ and $\mu_{A,0}^{r,\tau}$ instead of  $\mu_{A,0}^{L_r,\tau}$.

\begin{theorem}\label{theorem-dim-irreg}
Let $R/C_{\rho ,J}$ be a toral $d$--dimensional module, $\beta$
generic, $p\in Y_i$ generic, $i=1,\ldots ,n$ and $s$ a real
number with $s\geq 1$. We have that
$$\dim_{\C} \mathcal{H}^0 \left(\operatorname{Irr}_{Y_i}^s ({M}_A (C_{\rho ,J} ,\beta))\right)_p = \mu_{\rho , J} \sum_{i \notin \tau \in \Phi_{A_J}^{s}\setminus
\Phi_{A_J}^{1}} \operatorname{vol}_{\Z A_J } (A_{\tau})$$
\end{theorem}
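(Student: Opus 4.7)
My plan is to reduce the computation to the hypergeometric case by iterating the toral filtration of $R/C_{\rho,J}$, and then to apply the known irregularity formula for GKZ systems from \cite{F}. First I would fix a toral filtration $0=M_0\subsetneq M_1\subsetneq\cdots\subsetneq M_r=R/C_{\rho,J}$ with $M_k/M_{k-1}\simeq (R/I_{\rho_k,J_k})(\gamma_k)$. By the argument in the proof of Lemma \ref{qdeg-toral-lemma}, exactly $\mu_{\rho,J}$ indices $k$ have $I_{\rho_k,J_k}=I_{\rho,J}$, while the remaining toral primes $I_{\rho_k,J_k}$ properly contain $I_{\rho,J}$, producing quotients of Krull dimension strictly less than $d$. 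Genericity of $\beta$ allows me to assume that $-\beta$ avoids these finitely many smaller quasidegree sets, so \cite[Theorem 4.5]{DMM} forces $\cH_j(E-\beta,M_k/M_{k-1})=0$ for all $j$ whenever $I_{\rho_k,J_k}\neq I_{\rho,J}$; for the $\mu_{\rho,J}$ remaining subquotients genericity of $\beta$ similarly kills the higher Euler--Koszul homologies, leaving $\cH_0(E-\beta,M_k/M_{k-1})\simeq M_A(I_{\rho,J},\beta-\gamma_k)$.

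Using the long exact sequences of Euler--Koszul homology and decreasing induction on $k$ (exactly as in the proof of Theorem \ref{mult-L-char-cycles}), I would then deduce that $M_A(C_{\rho,J},\beta)$ carries a filtration by $D$-submodules whose nonzero successive quotients are the $\mu_{\rho,J}$ modules $M_A(I_{\rho,J},\beta-\gamma_k)$. Applying $\operatorname{Irr}_{Y_i}^{(s)}$ to each of the associated short exact sequences of holonomic $D$-modules yields distinguished triangles of perverse sheaves by \cite[Theorem 6.3.3]{Meb90}. At a generic point $p\in Y_i$ each such perverse sheaf is concentrated in cohomological degree $0$, so the triangles degenerate to short exact sequences of finite-dimensional $\CC$-vector spaces; since generically the dimension of $\cH^0(\operatorname{Irr}_{Y_i}^{(s)}M_A(I_{\rho,J},\beta'))_p$ does not depend on the specific shift $\beta'$, additivity gives
$$\dim_\CC \cH^0(\operatorname{Irr}_{Y_i}^{(s)}M_A(C_{\rho,J},\beta))_p = \mu_{\rho,J}\,\dim_\CC \cH^0(\operatorname{Irr}_{Y_i}^{(s)}M_A(I_{\rho,J},\beta))_p.$$

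The remaining task is hypergeometric in nature. By Remark \ref{remark-toral-hypergeometric}, after rescaling the variables $x_j$, $j\in J$, via $\rho$, the $D_J$-module $M_{A_J}(I_\rho,\beta)$ is isomorphic to $M_{A_J}(\beta)$, and $M_A(I_{\rho,J},\beta)$ is identified with the $D$-module inverse image $\pi^{\ast}M_{A_J}(\beta)$ under the coordinate projection $\pi\colon\CC^n\to\CC^J$. The behavior of $\operatorname{Irr}$ under smooth pull-back splits into two cases: for $i\in J$ one has $Y_i=\pi^{-1}(Y_i^J)$ and the generic stalk of $\operatorname{Irr}_{Y_i}^{(s)}\pi^{\ast}M_{A_J}(\beta)$ has the same dimension as that of $\operatorname{Irr}_{Y_i^J}^{(s)}M_{A_J}(\beta)$; for $i\notin J$ the hyperplane $Y_i$ is transverse to the fibres of $\pi$ and $\pi^{\ast}M_{A_J}(\beta)$ is regular along $Y_i$, so the generic stalk vanishes. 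The latter case is consistent with the stated right-hand side because, when $i\notin J$, the $L_s$-weights restrict to the order filtration on the $A_J$-variables, hence $\Phi_{A_J}^{s}=\Phi_{A_J}^{1}$ and the sum is empty. Invoking the known hypergeometric formula of \cite{F},
$$\dim_\CC \cH^0(\operatorname{Irr}_{Y_i^J}^{(s)}M_{A_J}(\beta))_q=\sum_{i\notin\tau\in\Phi_{A_J}^{s}\setminus\Phi_{A_J}^{1}}\vol_{\ZZ A_J}(A_\tau)$$
at a generic $q\in Y_i^J$, combined with the factor $\mu_{\rho,J}$ from the previous paragraph, would yield the theorem.

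I expect the main obstacle to be the clean treatment of the smooth pull-back of the irregularity complex in both cases above---particularly the cancellation to zero when $i\notin J$, which depends on the transversality of $Y_i$ to the fibres of $\pi$---together with justifying the additivity of $\cH^0$ at the generic stalk, which ultimately rests on Mebkhout's perversity theorem and on $p$ being generic so that the distinguished triangles reduce to short exact sequences concentrated in cohomological degree $0$.
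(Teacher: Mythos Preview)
Your argument is correct but follows a genuinely different route from the paper's. The paper never filters $M_A(C_{\rho,J},\beta)$ directly at the level of irregularity. Instead it applies the Laurent--Mebkhout index formula \cite[Lemme 1.1.2 and \S2.3]{Laurent-Mebkhout} to $M_A(C_{\rho,J},\beta)$ itself, expressing the generic stalk dimension of $\cH^0(\operatorname{Irr}_{Y_i}^{(s)})$ as an alternating sum of $L$-characteristic cycle multiplicities $\mu_{A,0}^{r,\tau}(R/C_{\rho,J},\beta)$ for $r\in\{1+\epsilon,\, s+\epsilon\}$ and $\tau\in\{\emptyset,\{i\}\}$. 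Then it invokes Theorem~\ref{mult-L-char-cycles}, which already encapsulates the toral-filtration argument at the level of characteristic cycles, to replace each multiplicity by $\mu_{\rho,J}\cdot\mu_{A_J}^{L_J,\tau}$, and finally quotes \cite[Theorem 7.5]{F}.

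In other words, the paper does the filtration bookkeeping once, inside the proof of Theorem~\ref{mult-L-char-cycles}, and then transfers the result to irregularity via Laurent--Mebkhout; you redo that same filtration bookkeeping, but for the irregularity complex directly, using perversity to guarantee additivity at a generic point. Your approach avoids the explicit index formula and is arguably more conceptual, but it requires the extra ingredients you flag at the end: the compatibility of $\operatorname{Irr}^{(s)}$ with the smooth pull-back $\pi^*$ (including the vanishing when $i\notin J$) and the degeneration of the distinguished triangles at a generic $p$. The paper's approach sidesteps both issues entirely, since Laurent--Mebkhout reduces everything to $L$-characteristic multiplicities, for which additivity is elementary and the relation to the $D_J$-module is already built into Theorem~\ref{mult-L-char-cycles}.
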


\begin{proof} We follow the argument of the proof of Theorem 7.5 in \cite{F}.
We apply results of Y. Laurent and Z. Mebkhout
\cite[Lemme 1.1.2 and Section 2.3]{Laurent-Mebkhout} to get
$$\dim_{\C} \mathcal{H}^0 \left(\operatorname{Irr}_{Y_i}^s ({M}_A (C_{\rho ,J} ,\beta))\right)_p = \mu_{A,0}^{s+\epsilon,\emptyset} - \mu_{A,0}^{1+\epsilon,\emptyset} + \mu_{A,0}^{1+\epsilon,\{i\}} - \mu_{A,0}^{s+\epsilon,\{i\}}.$$
To finish the proof we apply Theorem \ref{mult-L-char-cycles}
and Theorem 7.5 \cite{F}.
\end{proof}

\begin{remark}
Notice that the above formula for $\dim_{\C} \mathcal{H}^0
(\operatorname{Irr}_{Y_i}^s ({M}_A (C_{\rho ,J} ,\beta
)))_p=0$ yields zero if $i\notin J$ since in that case the induced
filtration $(L_s)_J$ (denoted just by $s$ by abuse of notation) is
constant and so $\Phi_{A_J}^{s}\setminus \Phi_{A_J}^{1}=\emptyset$.
\end{remark}

Let us see how to compute Gevrey solutions of a binomial $D$--module
$M_A(I,\beta)$. By (3.3) in \cite{DMM2} the $I_{\rho , J}$--primary
component $C_{\rho ,J}$ of an irredundant primary decomposition of
any $A$--graded binomial ideal $I$ (for some minimal associated
prime $I_{\rho ,J}=I_{\rho}+ \mathfrak{m}_J$ of $I$) contains
$I_{\rho}$. Thus,
\begin{equation}
I_{\rho} + \mathfrak{m}_J^r \subseteq C_{\rho ,J} \subseteq
\sqrt{C_{\rho ,J}} = I_{\rho ,J} = I_{\rho} + \mathfrak{m}_J
\label{contention1}\end{equation} for sufficiently large integer
$r$. In fact, it is not hard to check that $C_{\rho,J}= I_{\rho} +
B_{\rho ,J}$ for some binomial ideal $B_{\rho ,J}\subseteq R$ such that $\mathfrak{m}_J^r \subseteq B_{\rho ,J}
\subseteq \mathfrak{m}_J$. Let us fix such an ideal $B_{\rho,J}$.

For any monomial ideal $\mathfrak{n}\subseteq C_{\rho ,J}$ such that
$\sqrt{\mathfrak{n}}=\mathfrak{m}_J$ we have that $$H_A (I_{\rho}
+\mathfrak{n}, \beta )\subseteq H_A (C_{\rho ,J} ,\beta ) \subseteq
H_A (I_{\rho ,J},\beta ).$$ Let us fix such an ideal $\mathfrak n$.
In particular, any formal solution of $M_A (I_{\rho ,J},\beta )$ is
a solution of $M_A (C_{\rho , J},\beta )$ and any solution of $M_A
(C_{\rho , J},\beta )$ is a solution of $M_A (I_{\rho} +\mathfrak{n}
,\beta )$.

Let us assume that $C_{\rho ,J}$ is toral (i.e. $ R/I_{\rho,J}$ has
Krull dimension equal to $\rank A_J$). We will also assume that
$\rank A_J = \rank A$ in order to ensure that $\qdeg ( R/C_{\rho ,J}
)=\C^d$.

On the one hand, both the solutions of $M_A (I_{\rho, J} ,\beta )$
and the solutions of $M_A (I_{\rho} +\mathfrak{n} ,\beta )$ can be
described explicitly if the parameter vector $\beta$ is generic
enough. More precisely, a formal solution of the hypergeometric
system $M_A (I_{\rho , J} ,\beta )$ with very generic $\beta$ is
known to be of the form
$$\phi_v =\sum_{u \in \ker A_J \cap \Z^J } \rho (u)
\dfrac{(v)_{u_{-}}}{(v+u)_{u_{+}}} x_J^{v+u}$$ where $v\in \C^J$
such that $A_J v=\beta$ and $(v)_w =\prod_{j \in J} \prod_{0 \leq
i\leq w_j -1}(v_j -i)$ is the Pochhammer symbol (see \cite{GZK89,
SST}).  Here, $v$ needs to verify additional
conditions in order to ensure that $\phi_v$ is a formal series along
a coordinate subspace or a holomorphic solution.

The vectors $v$ you need to consider to describe a basis of the
space of Gevrey solutions of a given order along a coordinate
subspace of $\C^n$ for the binomial $D$-module $M_A (I_{\rho , J}
,\beta )$ are the same that are described in \cite{F} for the
hypergeometric system $M_{A_J} (\beta )$.

On the other hand, for $\gamma$ in $\N^{\overline{J}}$ let
$G_{\gamma}$ be either a basis of the space of holomorphic solutions
near a non singular point or the space of Gevrey solutions of a
given order along a coordinate hyperplane of $\C^J$ for the system
$M_{A_J} (I_{\rho},\beta - A_{\overline{J}}\gamma)$, where
$\overline{J}$ denotes the complement of $J$ in $\{1,\ldots ,n\}$
and $x_{\overline{J}}^{\gamma}$ runs in the set
$\operatorname{S}_{\overline{J}} (\mathfrak{n})$ of monomials in $\C
[x_{\overline{J}}]$ annihilated by the monomial differential
operators in $\mathfrak{n}$. Then a basis of the same class of
solutions for the system $M_A (I_{\rho} +\mathfrak{n} ,\beta )$ is
given by $$\mathcal{B}=\{ x_{\overline{J}}^{\gamma} \varphi : \;
x^{\gamma} \in \operatorname{S}_{\overline{J}} (\mathfrak{n}), \;
\varphi \in G_{\gamma}\}$$

We conclude that any holomorphic or formal solution of $M_A (C_{\rho
,J},\beta )$ can be written as a linear combination of the series in
$\mathcal{B}$. The coefficients in a linear combination of elements
in $\mathcal{B}$ that provide a solution of $M_A (C_{\rho ,J},\beta
)$ can be computed if we force a general linear combination to be
annihilated by the binomial operators in a set of generators of
$B_{\rho ,J}$ that are not in $\mathfrak{n}$.

Thus, the main problem in order to compute formal or analytic
solutions of $M_A (C_{\rho ,J},\beta )$ is that the ideal $B_{\rho,
J}$ is not a monomial ideal in general and that a minimal set of
generators may involve some variables $x_j$ for $j\in J$. Let us
illustrate this situation with the following example.

\begin{example} Let us write $x=x_1, y=x_2, z=x_3, t=x_4$ and consider the
binomial ideal $C_{\rho,J}= I_{\rho} + B_{\rho ,J}\subseteq \C
[\partial_x,\partial_y ,\partial_ z,\partial_t]$ where $J=\{1,2\}$,
$\rho : \ker (A_J )\cap \Z^2 \rightarrow \C^{\ast}$ is the trivial
character, $A$ is the row matrix $(2,3,2,2)$, $I_{\rho}= \langle
\partial_x^3 -\partial_y^2\rangle$ and $B_{\rho ,J}=\langle \partial_z^2
-\partial_x \partial_t , \partial_t^2 \rangle$.

Notice that $C_{\rho,J}$ is $A$-graded for the row matrix $A= (2 \;
3 \; 2\; 2)$ and that $C_{\rho,J}$ is toral and primary.

Since $C_{\rho,J}$ is primary and its radical ideal is $I_{\rho} +
\mathfrak{m}_J =\langle
\partial_x^3 -\partial_y^2 ,\partial_z ,\partial_t \rangle$, we have that $M_A (C_{\rho ,J},\beta )$ is an
irregular binomial $D$-module for all parameters $\beta \in \C$ (see
Theorem \ref{regularity}) and that it has only one slope $s=3/2$
along its singular locus $\{y=0\}$.

We are going to compute the Gevrey solutions of $M_A (C_{\rho
,J},\beta )$ corresponding to this slope.

By the previous  argument and using that $\mathfrak{n}=\langle
\partial_z^4 , \partial_t^2 \rangle \subseteq B_{\rho ,J}$ we obtain that any
Gevrey solution of $M_A (C_{\rho ,J},\beta )$ along $\{y=0\}$ can be
written as
$$f=\sum_{\gamma,k} \lambda_{\gamma,k} z^{\gamma_z } t^{\gamma_t} \phi_k (\beta - 2 \gamma_z
-2 \gamma_t )$$ where $\lambda_{\gamma,k}\in \C$, $\gamma =(\gamma_z
,\gamma_t )$, $\gamma_z \in \{0,1,2,3\}$, $\gamma_t , k \in \{0,1\}$
and $$\phi_k (\beta - 2 \gamma_z -2 \gamma_t )=\sum_{m\geq
0}\dfrac{((\beta -3 k )/2 - \gamma_z -  \gamma_t)_{3m}}{(k+2 m)_{2
m}} x^{(\beta -3 k )/2 - \gamma_z -  \gamma_t - 3 m} y^{k +2 m}$$ is
a Gevrey series of index $s=3/2$ along $y=0$ at any point $p\in
\{y=0\}\cap \{x\neq 0\}$ if $(\beta - 3 k)/2 -\gamma_z -  \gamma_t
\notin \N$.

We just need to force the condition $\partial_x \partial_t
(f)=\partial_z^2 (f)$ in order to obtain the values of
$\lambda_{\gamma ,k}$ such that $f$ is a solution of $M_A
(C_{\rho,J},\beta )$.

In this example, we obtain the conditions
$\lambda_{(2,1),k}=\lambda_{(3,1),k}=0$ for $k=0,1$ and
$$\lambda_{(\gamma_z +2 , 0), 1} = \dfrac{((\beta -3 k)/2 - \gamma_z
)}{(a+1)(a+2)} \lambda_{(\gamma_z , 1), k}
$$ for $k , \gamma_z = 0,1$.

In particular we get an explicit basis of the space of Gevrey
solutions of $M_A (C_{\rho ,J},\beta )$ along $y=0$ with index equal
to the slope $s=3/2$ and we have that the dimension of this space is
$8$. Notice that $8=4 \cdot 2$ is the expected dimension (see
Theorem \ref{theorem-dim-irreg}) since $\mu_{\rho ,J}= 4$ and the
dimension of the corresponding space for $M_A (I_{\rho ,J},\beta )$
is $2$ (see \cite{FC1, FC2}).
\end{example}

\end{document}